\definecolor{jeffColor}{RGB}{102, 0, 204}
\definecolor{yaizaColor}{RGB}{0, 153, 153}
\definecolor{certainty}{RGB}{64, 228, 198}
\definecolor{hope}{RGB}{228, 194, 64}
\definecolor{periodColor}{RGB}{255, 167, 105}
\definecolor{dark-green}{RGB}{135, 194, 130}
\tikzset{>=latex} % sets default arrow type in tikz
\tikzset{font=\small}
\tikzset{mark size=1.5pt, mark options=thin}
\tikzset{pin distance=4pt,
 every pin edge/.style={<-, thin, shorten <= -2pt}}
\definecolor{uipoppy}{RGB}{221,128,71}
\definecolor{uipaleblue2}{RGB}{179,196,215}
\definecolor{uiviolet}{RGB}{86,86,99}
\definecolor{uiblack}{RGB}{0, 0, 0}
\definecolor{azul}{RGB}{0,128,255}
\definecolor{verde}{RGB}{50,180,50}
\definecolor{pale-verde}{RGB}{155,207,145}
\definecolor{uipaleblue}{RGB}{108,199,220}
\definecolor{light-gray}{RGB}{198,198,198}
\newtheorem{lemma}{Lemma}
\newtheorem{theorem}[lemma]{Theorem}
\newtheorem{corollary}[lemma]{Corollary}
\theoremstyle{definition}
\newtheorem{definition}[lemma]{Definition}
\newtheorem{remark}[lemma]{Remark}
\newcommand{\mc}[1]{\mathcal{#1}}
\newcommand{\blue}[1]{{{#1}}}
\def\XXint#1#2#3{{\setbox0=\hbox{$#1{#2#3}{\int}$} \vcenter{\hbox{$#2#3$}}\kern-.5\wd0}}
\DeclareMathOperator{\diam}{diam}
\DeclareMathOperator{\supp}{supp}
\newcommand{\e}{\varepsilon}
\numberwithin{equation}{section}
\numberwithin{lemma}{section}
\newcommand{\someLetter}{\blue{\mc{S}}}
\title{Lower bounds for piecewise polynomial approximations of oscillatory functions}
\author{Jeffrey Galkowski}
\address{Department of Mathematics, University College London, London, UK}
\email{j.galkowski@ucl.ac.uk}
\begin{document}
\begin{abstract}
We prove lower bounds on the error incurred when approximating any oscillating function using piecewise polynomial spaces. \blue{The estimates are explicit in the polynomial degree and have optimal dependence on the meshwidth and frequency when the polynomial degree is fixed.}  These lower bounds, for example, apply when approximating solutions to Helmholtz plane wave scattering problem.
\end{abstract}
\maketitle

\section{Introduction}

In this article, we study the error incurred when approximating highly oscillatory functions using piecewise polynomial spaces. This type of space is standard when using both finite element and boundary element methods to numerically approximate solutions to partial differential equations (PDE). We are motivated by the application of these methods to solve high frequency problems. For example, to solve the Helmholtz sound-soft or sound-hard scattering problem:
\begin{equation}
\label{e:obstacle}
\begin{gathered}
(-\Delta -k^2)u=f\,\,\text{in }\mathbb{R}^d\setminus \Omega,\qquad u|_{\partial\Omega}=g,\qquad (\partial_r-ik)u=o_{r\to \infty}(r^{\frac{1-d}{2}}),\\
(-\Delta -k^2)u=f\,\,\text{in }\mathbb{R}^d\setminus \Omega,\qquad \partial_\nu u|_{\partial\Omega}=g,\qquad (\partial_r-ik)u=o_{r\to \infty}(r^{\frac{1-d}{2}}),
\end{gathered}
\end{equation}
where $\Omega\Subset \mathbb{R}^d$ is a bounded domain with connected complement
or the variable wave speed Helmholtz problem:
\begin{equation}
\label{e:inHome}
-\partial_{x_j}(a^{ij}\partial_{x_i}u)-k^2c^{-2}(x)u=f\,\,\text{in }\mathbb{R}^d,\qquad (\partial_r-ik)u=o_{r\to \infty}(r^{\frac{1-d}{2}}),
\end{equation}
where $a^{ij}(x)\equiv \delta^{ij}$ and $c(x)\equiv 1$ for $|x|\gg 1$. In both cases the solution, $u$, with data coming from a scattering problem will oscillate at frequency $k$ in a sense to be made precise below. Since numerical methods such as the Galerkin method seek to approximate $u$ in some finite dimensional space, $\mc{V}_k$, it is important to understand what the best possible approximation error for such oscillating functions is. Indeed, a numerical method for a high frequency PDE is frequency-robust quasi-optimal if the error in the method is controlled uniformly by the best approximation error in the relevant finite dimensional space; i.e. if the numerical solution, $u_{num}$, satisfies
$$
\|u-u_{num}\|\leq C_{\rm{qo}}\inf_{v\in \mc{V}_k}\|u-v\|
$$
where $u$ is the exact solution and $C_{\rm{qo}}>0$ is a constant that is uniform over $k>1$. There has been a great deal of effort in understanding when numerical methods based on piecewise polynomial spaces are frequency-robust quasi-optimal (see e.g.~\cite{LaSpWu:22,GaLaSpWu:21,MeSa:11,GaSp:22,Ih:98,IhBa:97,IhBa:95,MeSa:10, ChNi:20} and references there-in).

Upper bounds on the error for piecewise polynomial approximations are completely standard in the literature (see e.g.~\cite[Section 3.1]{Ci:02}~\cite[Chapter 4]{Bre:08}~~\cite[Chapter 4]{SaSc:11}).  In this article, we prove complementary, optimal lower bounds on the error when approximating \emph{any} oscillatory function by piecewise polynomials and hence, on the absolute error for many frequency-robust quasi-optimal methods (see Section~\ref{s:approx-k} for more detail). Furthermore, in forthcoming work~\cite{GaRaSp:22}, we will use these estimates to show that the standard second-kind boundary element methods for trapping Dirichlet problems and, even non-trapping, Neumann problems suffer from the pollution effect i.e. that, \blue{when the polynomial degree is fixed,} the mesh-width, $h$, must satisfy $hk=o(1)$ in order to maintain accuracy as the frequency increases.

We now state a consequence of the main theorem of this paper (Theorem~\ref{t:ML2}) informally.
\begin{theorem}
\label{t:RdL2Informal}
Let $0<\Xi_L<\Xi_H$. Then there are $k_0>0$ and $c>0$ such that for all $p\in 0,1,\dots$, $k>k_0^{p+1}$, all $u\in L^2(\mathbb{R}^d)$ oscillating with frequency between $\Xi_Lk$ and $\Xi_Hk$,  all $0<h<1$, and all piecewise polynomials, $v_h$, of degree $p$ on a regular mesh with scale $h$
\begin{equation}
\label{e:lowL2Rd1}
\Big(\frac{chk}{p^2}\Big)^{p+1}\|u\|_{L^2(\mathbb{R}^d)}\leq \|u-v_h\|_{L^2(\mathbb{R}^d)}.
\end{equation}
\end{theorem}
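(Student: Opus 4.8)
The plan is to reduce the statement to a local computation on a single mesh element and then exploit the oscillation of $u$ via a Fourier-side argument. First I would localize: since $v_h$ is piecewise polynomial of degree $p$ on a regular mesh of scale $h$, the restriction of $v_h$ to any single cell $Q$ (of diameter $\sim h$) is a polynomial of degree $\le p$, and the cells partition $\mathbb{R}^d$. Thus $\|u-v_h\|_{L^2(\mathbb{R}^d)}^2 = \sum_Q \|u - v_h|_Q\|_{L^2(Q)}^2 \ge \sum_Q \big(\inf_{\pi \in \mathcal{P}_p} \|u-\pi\|_{L^2(Q)}\big)^2$, so it suffices to lower bound the best polynomial approximation of $u$ on each cell and then sum. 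The core claim I would isolate is: on a cube $Q$ of side $h$, $\inf_{\pi\in\mathcal P_p}\|u-\pi\|_{L^2(Q)} \gtrsim (hk)^{p+1}\|u\|_{L^2(\widetilde Q)}$ up to a local multiplicative constant, where $\widetilde Q$ is $Q$ or a mild enlargement, \emph{provided} $u$ oscillates at frequency $\sim k$.

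For the per-cell estimate, the natural mechanism is that polynomials of degree $\le p$ are annihilated by $(p+1)$-fold differentiation, while a function oscillating at frequency $\sim k$ has $\partial^{p+1}$-derivatives of size $\sim k^{p+1}$ times the function. Concretely, I would use a Poincaré/Bramble–Hilbert type inequality in reverse: for any $\pi\in\mathcal P_p$ and any multi-index $\alpha$ with $|\alpha|=p+1$, $\partial^\alpha(u-\pi)=\partial^\alpha u$, and by a scaled interpolation/Poincaré inequality on $Q$ one has $h^{p+1}\|\partial^\alpha u\|_{L^2(Q)} \lesssim \|u-\pi\|_{L^2(Q)} + (\text{lower-order derivative terms})$, which must be handled carefully — perhaps better to test $u-\pi$ against a suitable oscillatory dual function (e.g. a bump times $e^{ik\omega\cdot x}$ with $\omega$ aligned with the dominant frequency) chosen to be $L^2$-orthogonal to $\mathcal P_p$ on $Q$ after subtracting its degree-$p$ Taylor polynomial; the pairing then picks out the size $(hk)^{p+1}$. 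The precise meaning of "oscillating with frequency between $\Xi_L k$ and $\Xi_H k$" (presumably a frequency-support or semiclassical wavefront condition from the main theorem) is what supplies the lower bound $\|\partial^\alpha u\|_{L^2} \gtrsim k^{p+1}\|u\|_{L^2}$ on a suitable scale, and I would invoke Theorem~\ref{t:ML2} here.

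The summation step is then routine: assuming the local bound holds with constants uniform in $Q$ (which requires the mesh regularity and the frequency localization being scale-invariant), sum $\sum_Q (hk)^{2(p+1)}\|u\|_{L^2(\widetilde Q)}^2 \gtrsim (hk)^{2(p+1)}\|u\|_{L^2(\mathbb{R}^d)}^2$ using finite overlap of the enlarged cells $\widetilde Q$; take square roots. One subtlety is that the requirement $k>k_0$ (rather than all $k$) and the appearance of $c$ come from the need for $k$ large enough that "oscillation at frequency $\sim k$" genuinely forces large high derivatives on the relevant length scale, and from absorbing the finite-overlap and shape-regularity constants.

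I expect the main obstacle to be the per-cell lower bound, specifically making the frequency-localization hypothesis interact cleanly with the \emph{best} polynomial approximation (an infimum over all of $\mathcal P_p$, not a fixed projection) — the dual-function/orthogonality construction has to be robust to the direction of oscillation varying from cell to cell, and one must ensure the test function's pairing against $u$ is bounded below by $\|u\|_{L^2}$ locally rather than being killed by cancellation. If $u$ can have patches where it is small (e.g. near zeros of an oscillatory integral), the enlargement $\widetilde Q$ and a covering/averaging argument over nearby cells should recover a nontrivial fraction of the mass, but controlling that uniformly is the delicate point; I anticipate this is exactly where the hypothesis from Theorem~\ref{t:ML2} is designed to be used.
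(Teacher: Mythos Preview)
Your central reduction --- bounding $\inf_{\pi\in\mathbb P_p}\|u-\pi\|_{L^2(Q)}$ from below by $(hk)^{p+1}\|u\|_{L^2(\tilde Q)}$ on each cell $Q$ and then summing --- does not work, and this is a genuine gap rather than a technicality. The per-cell inequality is false: take $u(x)=2\cos(kx_1)$ on $\mathbb{R}^d$ and $Q$ a cube of side $h$ centred at the origin. Then $\|u\|_{L^2(Q)}\sim h^{d/2}$, while for $p=0$ the best constant approximation has error $\sim (hk)^2 h^{d/2}$, so the ratio is $(hk)^2\ll hk$. The point is that the oscillation hypothesis is a \emph{global} frequency condition and is invisible on a single cell of size $h\ll k^{-1}$; near an extremum of $u$ the function is locally much closer to a polynomial than the putative bound allows. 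Your ``Poincar\'e in reverse'' inequality $h^{p+1}\|\partial^\alpha u\|_{L^2(Q)}\lesssim\|u-\pi\|_{L^2(Q)}+\cdots$ fails for the same reason, and the dual-function construction you sketch would have to confront exactly this cancellation. (Separately, invoking Theorem~\ref{t:ML2} is circular: Theorem~\ref{t:RdL2Informal} \emph{is} the special case $M=\mathbb{R}^d$, $m'=0$ of Theorem~\ref{t:ML2}; the derivative bound $\|\partial^\alpha u\|_{L^2}\gtrsim k^{p+1}\|u\|_{L^2}$ comes directly from the oscillation definition, not from that theorem.)

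The paper avoids any per-cell lower bound and works globally. Writing $\|\partial_x^\alpha u\|_{L^2(T)}^2=\langle\partial_x^\alpha u,\partial_x^\alpha(u-q_T)\rangle_{L^2(T)}$ for $|\alpha|=p+1$ (your observation that $\partial^\alpha$ kills $q_T$), the key step is to integrate by parts $p+1$ further times to move all $2(p+1)$ derivatives onto $u$, leaving $\langle\partial_x^{2\alpha}u,\,u-q_T\rangle_{L^2(T)}$ plus boundary terms. The boundary terms are controlled by scaled trace estimates and interpolation (Lemma~\ref{l:elementEst}); after summing over $T$ and applying Young's inequality one obtains
\[
\sum_T\sum_{|\alpha|=p+1}\|\partial_x^\alpha u\|_{L^2(T)}^2\le \epsilon\|u\|_{H^{2(p+1)}}^2+C_\epsilon h^{-2(p+1)}\|u-v_h\|_{L^2}^2.
\]
Both halves of the oscillation hypothesis are now used: the lower frequency cutoff gives $\sum_T\|\partial^{p+1}u\|^2\ge c k^{2(p+1)}\|u\|_{L^2}^2$ (Lemma~\ref{l:lowOut}), and the $H^{2(p+1)}$ upper bound lets the $\epsilon$-term be absorbed. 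The need for $H^{2(p+1)}$ control --- twice as many derivatives as one might naively expect --- is precisely what makes the argument go through without any local lower bound.
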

\begin{remark}The precise definition of a piecewise polynomial on a regular mesh is given in Section~\ref{s:mesh} and of the concept of oscillating with a certain frequency in Section~\ref{s:oscDef}.\end{remark}

\begin{remark}Note that we have used a mesh on all of $\mathbb{R}^d$ to simplify the statement of the theorem. Such a mesh can easily be defined, for example, by constructing a mesh on $[0,1]^d$ and tiling $\mathbb{R}^d$ with this mesh.\end{remark}

A standard assumption in numerical analysis of high-frequency Helmholtz problems is that the meshwidth $h$ must satisfy $hk \lesssim 1$ (see e.g.~\cite{IhBa:95}). Here, we use the notation $a\lesssim b$ if $a\leq Cb$ for some constant independent of $k$. This can be justified heuristically using the Shannon-Nyquist-Whittaker sampling theorem. However, this theorem holds only in one dimension and for functions with compactly-supported Fourier transform.  Theorem~\ref{t:RdL2Informal}, or more precisely, Theorem~\ref{t:ML2}, shows that, \blue{at least when $p$ is bounded independent of $k$,} $hk\ll1$  is required to approximate a function oscillating with frequency $\sim k$ (i.e. $ck\leq \text{frequency}\leq Ck$) accurately using piecewise polynomials. In other words, for piecewise polynomials $\sim k^d$ degrees of freedom are required to approximate such a function and hence, no method based on polynomials \blue{of fixed degree} in dimension $d$ can be accurate without at least this many degrees of freedom.

Despite the fact that they have many natural applications in numerical analysis, lower estimates on the approximation error for oscillatory functions are absent in the literature. Indeed, the only lower estimates on approximation by finite dimensional spaces of which the author is aware concern the Kolmogorv $n$-width (see~\cite{Je:72} and references there-in). These estimates assert the existence e.g. of some $H^{p+1}$ function, $u$, not necessarily oscillating at any particular frequency which achieves~\eqref{e:lowL2Rd1}. This existence result says nothing about the structure of $u$ nor how many such $u$ there are (see Section~\ref{s:nWidth} for a more detailed discussion).  Because of this, it is not useful for giving lower estimates on the approximation error in practice for many numerical problems.

\begin{remark}
\blue{The reader may be aware of the lower bound on polynomial approximation provided by Bramble--Hilbert~\cite{BrHi:70}
$$
|u|_{H^{p+1}(\Omega)}\leq \inf_{v\in\mathbb{P}_{p}}\|u-v\|_{H^{p+1}(\Omega)},
$$
where $\mathbb{P}_{p}$ denotes the space of polynomials of degree $p$ and $|u|_{H^{p+1}(\Omega)}$ denotes the $(p+1)^{\text{st}}$ Sobolev seminorm. However, this estimate is not useful in practice since it involves an equal number of derivatives on the left-hand and right-hand sides. Moreover, this estimate is essentially trivial since $|v|_{H^{p+1}}=0$ and $|u|_{H^{p+1}}\leq \|u\|_{H^{p+1}}$. }

\blue{
In fact, one requires \emph{both} control from below on $(p+1)^{th}$ order derivatives \emph{and} from above on high derivatives of $u$ to obtain an estimate like~\eqref{e:lowL2Rd1}. For instance, consider $u(x):= 1+\e^{p+1}\sin(\e^{-1}k x)$ on $[0,1]$, with the standard mesh by intervals $(j/N,(j+1)/N)$ $j=0,\dots, N-1$. Then, with $\mathbb{P}_{p,N}$ the piecewise polynomials of degree $p$, on this mesh, we have
$$
\inf_{v\in\mathbb{P}_{0,N}}\|u-v\|_{L^2(0,1)}\leq C\e^{p+1}\ll (k/N)^{p+1}\|u\|_{L^2(0,1)},\qquad \inf_{\e>0}|u|_{H^{p+1}(0,1)}\gtrsim k^{p+1}
$$
provided that $\e\ll k/N$. }
\end{remark}

Proving Theorem~\ref{t:RdL2Informal} involves two substantial new difficulties relative to existing results. First, for a given $u$, unlike for the corresponding upper bounds, it is not possible to prove~\eqref{e:lowL2Rd1} by construction of an interpolating polynomial. One must instead consider \emph{all} possible piecewise polynomial and \emph{all} possible regular meshes simultaneously and hence the proof must be based on the inherent structure of piecewise polynomial spaces. Second, since we want the estimate~\eqref{e:lowL2Rd1} for \emph{all} possible oscillating functions, it is not sufficient to construct a single bad oscillating function and again one must use instead the structure inherent in the space of oscillating functions.

\subsection{Definitions of meshes and polynomial spaces}
\label{s:mesh}
We work with piecewise polynomial finite element spaces. In order to describe these spaces, we first introduce regular meshes of an (open) Riemannian manifold  $(M,g)$ of dimension $d$, possibly with Lipschitz boundary. In practice, $M$ is usually either a domain $\mathscr{O}$ in $\mathbb{R}^n$ (with $d=n$)  and the standard Euclidean metric or a hypersurface $\Gamma$ embedded in $\mathbb{R}^n$ (with $d=n-1$) endowed with the metric inherited from $\mathbb{R}^n$. In order to place these two cases in a uniform framework, we use the language of Riemannian manifolds.
\begin{definition}[meshes for $M$]
Let $\someLetter\Subset \mathbb{R}^d$ be open with Lipschitz boundary. A  \emph{mesh for $M$ with reference element $\someLetter$, $\mc{T}$,} is a locally finite collection of open subsets of $M$ such that the following holds:
\begin{enumerate}
\item The open sets are disjoint in the sense that if $T_1,T_2\in \mc{T}$ and $T_1\cap T_2\neq \emptyset$, then $T_1=T_2$. 
\item $\mc{T}$ covers $M$ in the sense that $\overline{M}=\bigcup_{T\in \mc{T}}\overline{T}.$
\item For every $T\in \mc{T}$, there is $p\geq 1$ and a bijection $\gamma_{T}:\overline{\someLetter}\to \overline{T}$ such that
$$
\sup_{T\in \mc{T}}\sup_{x\in \overline{T}}\sup_{|\alpha|\leq p}\|\partial_x^\alpha\gamma_T(x)\|+\|(D\gamma_T)^{-1}(x)\|<\infty,
$$
where $D\gamma_T$ denotes the Jacobian of $\gamma_T$.
\end{enumerate}
We say that $\mc{T}$ is a \emph{mesh for $M$} if there is $\someLetter\Subset \mathbb{R}^d$ such that $\mc{T}$ is a mesh for $M$ with reference element $\someLetter$. For $R>0$ and $p\in \{1,\dots\}$ we say that $\mc{T}$ is \emph{$(p,R)$ regular} if there are $\{\gamma_T\}_{T\in \mc{T}}$ such that
\begin{equation}
\label{e:coordinates}
\sup_{T\in \mc{T}}\sup_{|\alpha|\leq p}\sup_{x\in \overline{T}}\blue{\frac{1}{|\alpha|}\Big(\big(\|\partial_x^\alpha \gamma_T(x)\|\big)^{\frac{1}{|\alpha|}}}+\|(D\gamma_T)^{-1}(x)\|\Big)\leq\blue{ 1+R|\alpha|^{-1}}. 
\end{equation}
We call a collection $\{\gamma_T\}_{T\in \mc{T}}$ satisfying~\eqref{e:coordinates} a \emph{$(p,R)$-regular set of coordinates for $\mc{T}$}.
\end{definition}
\begin{remark}
It is easy to see that for any $p\geq 1$, a $(p,R)$ regular mesh is shape regular with shape regularity constant $\lesssim R^2$. Indeed, 
$$
\diam\big(\gamma_T(\someLetter)\big)\leq C_{\someLetter}\sup_{x\in\overline{T}}\sup_{|\alpha|=1}\|\partial_x^\alpha \gamma_T(x)\|\leq C_\someLetter R,
$$
and, if $B(x_0,r_\someLetter)\subset \someLetter$, then $B(\gamma_T(x_0),R^{-1}r_{\someLetter})\subset \gamma_T(\someLetter)$. 
 
\end{remark}
\begin{definition}[Broken Sobolev spaces]
For a mesh $\mc{T}$, we define the broken Space, $H_{\mc{T}}^\ell(M)\subset L^2(M)$ to be the set of $u\in L^2(M)$ such that $u|_{\mc{T}}\in H^\ell(T)$ for all $T\in \mc{T}$ and endow it with the norm
$$
\|u\|_{H_{\mc{T}}^\ell(M)}^2:=\|u\|_{L^2(M)}^2+\sum_{T\in\mc{T}}\|u|_{T}\|_{H^\ell(T)}^2.
$$
\end{definition}
\begin{remark}
Observe that if $u\in H^\ell(M)$, then $\|u\|_{H_{\mc{T}}^\ell(M)}=\|u\|_{H^\ell(M)}$ for any mesh $\mc{T}$. 
\end{remark}

We introduce the notion of $(p,R)$ regularity in~\eqref{e:coordinates} because we are interested in uniform estimates as the size of a mesh element decreases. In order to do this, we need to assume that as the mesh elements decrease in size, their behavior does not become too wild. This will be encoded using $(p,R)$ regularity.

Below, we will actually work with families of meshes at decreasing scale. To do this, we make the following definition.
\begin{definition}[Scales of meshes for $M$]
\label{d:scale}
\blue{Let $ p\in\{1,\dots\}\cup\{\infty\}$ and $\someLetter\Subset \mathbb{R}^d$ open with Lipschitz boundary}. A \emph{$p$-scale of meshes for $M$ with reference element $\someLetter$} is a set $I\subset (0,1)$ with $ 0\in \overline{I}$ and a collection of meshes for $M$,  $\{\mc{T}_{h}\}_{h\in I}$, such that $\mc{T}_{h}$ is a mesh for $M$ with reference element $h\someLetter$ and there is $R>0$ such that for all $h\in I$, $\mc{T}_{h}$ is $(p,R)$ regular. 

We say that $\mc{M}:=(I,\{\mc{T}_{h}\}_{h\in I})$ is a \emph{$p$-scale of meshes for $M$} if there is $\someLetter$ as above such that $\mc{M}$ is a $p$-scale of meshes for $M$ with reference element $\someLetter$.  We say that $\mc{M}$ is a \emph{scale of meshes for $M$} if there is $p$ such that $\mc{M}$ is a $p$-scale of meshes for $M$.
\end{definition}

The mesh, by itself, is not sufficient to define piecewise polynomial spaces. We need, in addition, a choice of maps $\gamma_T$. 
\begin{definition}[Coordinates for a scale of meshes]
\label{d:coord}
Let $\mc{M}=(I,\{T_h\}_{h\in I})_{h\in I}$ be a $p$-scale of meshes for $M$, we call a collection $\{\gamma_T\}_{T\in \mc{T}_{h},h\in I}$ a \emph{set of coordinates for $\mc{M}$} if there is $R>0$ such that for all $h\in I$, $k\in (0,\infty)$, $\{\gamma_T\}_{T\in \mc{T}_{h}}$ is a $(p,R)$-regular set of coordinates for $\mc{T}_{h}$. 
\end{definition}

\begin{remark}
Although in practice, the coordinate mappings $\gamma_T$ are usually either affine or isoparametric (i.e. mappings whose coordinate functions are, themselves polynomials of some fixed degree, in many theoretical considerations one assumes, for example, that the boundary of a domain is perfectly resolved. Since this cannot be done with affine or isoparametric mappings, we retain the flexibility to have more general coordinates.   
\end{remark}
\begin{remark}
In order that an isoparametric mapping \blue{of some fixed degree $p$} be a set of coordinates for a scale of meshes it is necessary only that the coefficients of the polynomials involved be uniformly bounded as $h\to 0$ and that the inverse of the Jacobian of the mapping be uniformly bounded above as $h\to 0$.
\end{remark}

We next define spaces of piecewise polynomials on a scale of meshes. We emphasize again that this definition depends \emph{both} on the mesh and on the coordinates for the mesh.
\begin{definition}[Piecewise polynomial spaces]
Let $\mc{M}:=(I,\{\mc{T}_h\}_{h\in I})$ be a scale of meshes for $M$ and $\mc{C}:=\{\gamma_T\}_{T\in \mc{T}_h,h\in I}$ a set of coordinates for $\mc{M}$. Let $\someLetter$ be the reference element for $\mc{M}$, $p\in \{0,1,\dots\}$, and $m\geq 0$. For $h\in I$, we define the polynomial approximation space of degree $p$ by 
$$
\mc{S}_{\mc{M},\mc{C},h}^{p,m}:=\{ u\in L^2(M)\,:\, u\circ \gamma_T\in \mathbb{P}_p|_{h\someLetter}\}\cap H^m(M),
$$
where $\mathbb{P}_p$ denotes the space of polynomials of degree $p$ on $\mathbb{R}^d$. Let $P_{\mc{T}_h,\ell}^{p,m}:H_{\mc{T}_h}^{\ell}(M)\to \mc{S}_{\mc{M},\mc{C},h}^{p,m}$ denote the broken $H_{k,\mc{T}_h}^{\ell}(M)$ orthogonal projection onto $\mc{S}_{\mc{M},\mc{C},h}^{p,m}$; i.e. the orthogonal projector with respect to any inner product whose norm is equivalent to 
$$
\|u\|_{H_{k,\mc{T}_h}^\ell(M)}^2:=\|u\|_{L^2(M)}^2+\sum_{T\in\mc{T}_h}\langle k\rangle^{-2\ell}\|u\|_{H^{\ell}(T)}^2,\qquad \langle k\rangle:=(1+k^2)^{1/2}.
$$ 
\end{definition}
Observe that, for some combinations of $m$ and $p$, $\mc{S}_{\mc{M},\mc{C},h}^{p,m}$ may consist only of global polynomials of degree $p$. However, it is useful to consider the conforming spaces of polynomials when trying to understand the `frequency-$k$' part of the error. See Theorems~\ref{t:RdHs} and~\ref{t:MHs}. Although analogous statements hold in the broken spaces, these statements are weaker than the ones given; in particular, the dual space for $H_{k,\mc{T}}^\ell$ requires $L^2$-type regularity at the interfaces between mesh elements and so does not measure only the frequency $\sim k$ parts of a function. In practice, one typically uses piecewise polynomials which are at most in $H^1$ but, because it does not complicate the analysis, we retain the flexibility to take $m>1$. 
\begin{remark}
Notice that when $u\in H^\ell(M)$,
$$
\|u\|^2_{H_{k,\mc{T}_h}^\ell(M)}=\|u\|^2_{H_k^\ell(M)}:= \|u\|_{L^2(M)}^2+\langle k\rangle^{-2\ell}\|u\|_{H^\ell(M)}^2.
$$
\end{remark}

\begin{remark}
It is more standard to work with a fixed reference element, $\someLetter$, rather than the shrinking element $h\someLetter$. However, the latter will be more convenient here and one can translate between the methods by pre-composing each of our coordinate $\gamma_T$ with the scaling map $s_h:\someLetter\to h\someLetter$, $s_h(x)=hx$. Defining meshes as in Definitions~\ref{d:scale} and~\ref{d:coord} allows us to guarantee that certain estimates (e.g the Poincar\'e--Wirtinger inequality) can be made uniform as $h\to 0$. The assumptions needed to guarantee these uniform estimates could instead be encoded directly in the derivatives of coordinate maps $\tilde{\gamma}_{T}:\someLetter\to M$, but the necessary assumptions on the maps from a fixed domain to a small ($h$-size) domain would be more complicated than those we use (from a small domain to a small domain).
\end{remark}

\subsection{Lower bounds for approximations on $\mathbb{R}^d$}
Although we give applications to meshes on manifolds below, our results are simplest to understand when approximating functions on $\mathbb{R}^d$ and we state them in this case first. 
For $u\in L^2(\mathbb{R}^d)$, we let $\hat{u}$ denote its Fourier transform. 
\begin{theorem}
\label{t:RdL2}
\blue{Let $r\in \{0,1,\dots,\}\cup \{\infty\}$, $\mc{M}=(I,\{\mc{T}_h\}_h)$ a $2(r+1)$-scale of meshes for $\mathbb{R}^d$, $\mc{C}$ a set of coordinates for $\mc{M}$, and $0<\Xi_L<\Xi_H$. } Then there are $c>0$ and $k_0>0$ such that for all \blue{$0\leq p\leq r$, $0\leq \ell\leq m\leq p+1$,all $k>k_0c^p$,} all $u\in L^2(\mathbb{R}^d)$ satisfying
\begin{equation}
\label{e:FourierSupport}
\supp \hat{u}\subset \{\xi\in \mathbb{R}^d\,:\,\Xi_L k\leq |\xi|\},\qquad \|u\|_{H^{2(p+1)}(\mathbb{R}^d)}\leq \blue{\langle \Xi_Hk+p+1\rangle^{2(p+1)}}\|u\|_{L^2(\mathbb{R}^d)},
\end{equation}
 all $h\in I$ with \blue{$chk/p^2\leq 1$}, and all  $0\leq m',m$ we have
\begin{equation}
\label{e:lowL2Rd}
\blue{c^p\Big(\frac{hk}{p^2})^{p+1-m'}}\|u\|_{L^2(\mathbb{R}^d)}\leq \|(I-P_{\mc{T}_h,\ell}^{p,m})u\|_{H_{k,\mc{T}_h}^{m'}(\mathbb{R}^d)}.
\end{equation}
Furthermore, if $p=0$, then $k_0$ can be taken arbitrarily small. 
\end{theorem}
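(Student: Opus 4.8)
The plan is to prove the contrapositive quantitative statement: if a piecewise polynomial $v_h$ approximates $u$ too well in $H_k^{m'}$, then $u$ cannot have all its Fourier mass at frequencies $\gtrsim k$. The key point is that on each mesh element $T$, after pulling back by $\gamma_T$, the best $L^2$ approximation of a function by $\mathbb{P}_p$ on $h\Omega$ controls the $H^{p+1}$-seminorm of that function via a Poincaré--Wirtinger / Bramble--Hilbert type inequality: there is $C>0$ (uniform in $h$ and in $T$, thanks precisely to the $(2(p+1),R)$-regularity built into the definition of a scale of meshes and to the use of the shrinking reference element $h\Omega$) such that for $w\in H^{p+1}(h\Omega)$,
\begin{equation}
\label{e:BHlower}
h^{p+1}\|\nabla^{p+1}w\|_{L^2(h\Omega)}\leq C\inf_{q\in \mathbb{P}_p}\|w-q\|_{L^2(h\Omega)}.
\end{equation}
This is the reverse direction of the usual estimate and is what lets us convert an upper bound on the approximation error into an upper bound on a high-order Sobolev seminorm of $u$; summing \eqref{e:BHlower} over all $T\in \mc{T}_h$ (after undoing the changes of variables $\gamma_T$, whose derivatives up to order $2(p+1)$ are uniformly bounded above and below) gives a global inequality of the form $h^{p+1}\|u\|_{\dot H^{p+1}}\lesssim \|(I-P_{\mc{T}_h,\ell}^{p,m})u\|_{L^2}$, modulo lower-order terms that are handled by the $H_k^{m'}$ norm and by interpolation in $m'$.

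First I would carefully set up \eqref{e:BHlower} on the fixed reference element $\Omega$ and then rescale: on $\Omega$ the estimate $\|\nabla^{p+1}\tilde w\|_{L^2(\Omega)}\leq C\inf_{q}\|\tilde w-q\|_{L^2(\Omega)}$ is false in general (the left side is only a seminorm) but becomes true on the quotient by $\mathbb{P}_p$ — the point is that $\nabla^{p+1}q=0$ exactly for $q\in\mathbb{P}_p$, and $\nabla^{p+1}$ is bounded below on the orthogonal complement of $\mathbb{P}_p$ in $H^{p+1}(\Omega)$ since $\mathbb{P}_p$ is finite-dimensional and $\ker\nabla^{p+1}=\mathbb{P}_p$. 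Rescaling to $h\Omega$ produces exactly the factor $h^{p+1}$. Second, I would transfer this to each curved element $T$ via $\gamma_T$: the Jacobian factors and the chain-rule expansion of $\nabla^{p+1}(w\circ\gamma_T)$ introduce constants depending only on $R$ (here is where we need control of $2(p+1)$ derivatives of $\gamma_T$, since differentiating the composition $p+1$ times and then comparing norms costs up to another $p+1$ derivatives in the comparison with $P$, roughly speaking), and crucially these constants are uniform over $h\in I$ and $T\in\mc{T}_h$. Third, I would sum over $T$ and combine with the other half of \eqref{e:FourierSupport}, namely $\|u\|_{H^{2(p+1)}}\leq(\Xi_H\langle k\rangle)^{2(p+1)}\|u\|_{L^2}$, to also bound lower-order seminorms; then I would use the Fourier support hypothesis $\supp\hat u\subset\{|\xi|\geq\Xi_L k\}$ to get the reverse bound $\|u\|_{\dot H^{p+1}}\geq(\Xi_Lk)^{p+1}\|u\|_{L^2}$ on the relevant range. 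Chaining these inequalities, choosing $k_0$ large enough that the lower-order error terms (which carry extra powers of $h$ or of $hk$) are absorbed, and interpolating between $m'=0$ and $m'=m$ to get the $(hk)^{p+1-m'}$ scaling, yields \eqref{e:lowL2Rd}. The case $p=0$ is special because then $2(p+1)=2$ and the inequality $\|\nabla w\|_{L^2}\lesssim\inf_{q\in\mathbb{P}_0}\|w-q\|_{L^2}$ on $h\Omega$ is the ordinary Poincaré--Wirtinger inequality, which holds with a constant $\lesssim h$ with no largeness assumption on $k$ at all — so $k_0$ can be taken arbitrarily small.

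The main obstacle, I expect, is \textbf{the uniformity of \eqref{e:BHlower} across all elements and all scales simultaneously}, i.e. genuinely leveraging the $(p,R)$-regularity. One must be careful that the reverse Poincaré-type constant on $h\Omega$ does not degenerate, which is why the definition fixes the \emph{shape} $\Omega$ and scales it cleanly by $h$ rather than allowing arbitrary element geometries; and one must check that pulling back through $\gamma_T$, whose inverse derivative $\|(D\gamma_T)^{-1}\|$ is bounded by $R$, does not blow up the ratio of the $L^2$-approximation error to the $\dot H^{p+1}$-seminorm. A secondary subtlety is the bookkeeping of the lower-order terms generated both by the change of variables and by the interpolation in $m'$: these must be shown to carry a genuine gain (a positive power of $h$ or $hk$) so that they are absorbed for $k>k_0$; the hypothesis $\|u\|_{H^{2(p+1)}}\le(\Xi_H\langle k\rangle)^{2(p+1)}\|u\|_{L^2}$ is exactly what makes this absorption possible, by ensuring the high-frequency tail of $u$ does not contribute more than $\|u\|_{L^2}$ worth of mass at the top order. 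Everything else — the statement on $\mathbb{R}^d$ versus a manifold, the precise Sobolev norm conventions — is routine once this core reverse estimate is in hand.
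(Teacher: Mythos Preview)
Your core inequality \eqref{e:BHlower} is false, and this breaks the argument. You claim that on the fixed reference element $\Omega$ one has $\|\nabla^{p+1}\tilde w\|_{L^2(\Omega)}\leq C\inf_{q\in\mathbb{P}_p}\|\tilde w-q\|_{L^2(\Omega)}$, justified by ``$\nabla^{p+1}$ is bounded below on the orthogonal complement of $\mathbb{P}_p$.'' But bounded below in which norms? What you would need is that $\nabla^{p+1}:L^2(\Omega)\to L^2(\Omega)$ is bounded, which is absurd: take $\tilde w(x)=e^{i\lambda x\cdot\omega}$ on $\Omega$, so that $\|\nabla^{p+1}\tilde w\|_{L^2}\sim \lambda^{p+1}\|\tilde w\|_{L^2}$ while $\inf_q\|\tilde w-q\|_{L^2}\leq \|\tilde w\|_{L^2}$. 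The same example shows your $p=0$ claim is wrong as well: Poincar\'e--Wirtinger says $\inf_{c}\|w-c\|_{L^2}\lesssim h\|\nabla w\|_{L^2}$, not the reverse. In short, the $L^2$ distance to $\mathbb{P}_p$ simply cannot see how fast a function oscillates above order $p$, so it cannot control the $(p+1)$th seminorm.

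The paper avoids this trap by never trying to bound $\|\nabla^{p+1}u\|$ by the approximation error alone. Instead, for $|\alpha|=p+1$ it writes
\[
\|\partial_x^\alpha(u\circ\gamma_T)\|_{L^2(\Omega_h)}^2=\langle \partial_x^\alpha(u\circ\gamma_T),\,\partial_x^\alpha((u-P u)\circ\gamma_T)\rangle_{L^2(\Omega_h)},
\]
using only that $(p+1)$th derivatives of the polynomial vanish, and then integrates by parts to shift derivatives from the factor $u-Pu$ onto the factor $u$. This puts up to $2(p+1)$ derivatives on $u$ (controlled by the hypothesis $\|u\|_{H^{2(p+1)}}\leq (\Xi_H\langle k\rangle)^{2(p+1)}\|u\|_{L^2}$) and leaves only an $L^2$ or $H^{m'}$ norm on $u-Pu$; the boundary terms generated by the integration by parts are handled with scaled trace estimates on $\Omega_h$. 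That is the mechanism you are missing: the $H^{2(p+1)}$ control on $u$ is not just for absorbing lower-order junk, it is what makes the pairing estimate work at all. Your outline, as written, never uses regularity of $u$ above order $p+1$ in an essential way, which is a signal that the argument cannot close.
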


\begin{remark}
One should heuristically understand~\eqref{e:FourierSupport} as follows. The first condition guarantees that $u$ has no very low frequencies ($\ll k$), while the second guarantees that it has no very high frequencies ($\gg k$). 
\end{remark}
It is easy to see that Theorem~\ref{t:RdL2} is optimal \blue{for uniformly bounded $p$}. Indeed, any $u$ satisfying~\eqref{e:FourierSupport} has
$$
\|\partial_x^\alpha u\|_{L^2(\mathbb{R}^d)}\leq C\Xi_H^{|\alpha|}\langle k\rangle ^{|\alpha|}\|u\|_{L^2(\mathbb{R}^d)}, \qquad |\alpha|\leq 2(p+1),
$$
for some $C$ depending only on $\alpha$ and the choice of norm on $H^s$.
and hence the standard estimate 
$$
\|(I-P_{\mc{T}_h,m}^{p,m})u\|_{H^{m}(\mathbb{R}^d)}\leq Ch^{p+1-m}\|u\|_{H^{p+1}(\mathbb{R}^d)}\qquad 0\leq m\leq p+1,
$$
(see e.g~\cite[Theorem 4.3.19]{SaSc:11},~\cite[Section 4.4]{Bre:08},~\cite[Section 3.1]{Ci:02}) together with the fact that our $u$ satisfies
$$\|u\|_{H^{s}(\mathbb{R}^d)}\leq  C\langle k\rangle^{s}\|u\|_{L^2(\mathbb{R}^d)},\qquad 0\leq s\leq p+1$$
 shows that, up to a constant,~\eqref{e:lowL2Rd} cannot be improved for many standard scales of meshes. Formally the results in these references apply only to compact subsets of $\mathbb{R}^d$, however, since the $H^m$ norm is local in the sense that for any open domains $\someLetter_i$ with Lipschitz boundary and
 $$
 \bigcup_i \overline{\someLetter}_i = \mathbb{R}^d,\, \someLetter_i\cap \someLetter_j=\emptyset, \text{ for }i\neq j,
 $$
 we have
 $$
 \|u\|_{H^m(\mathbb{R}^d)}^2=\sum_{\someLetter_i}\|u\|_{H^m(\someLetter_i)}^2,\qquad u\in H^m(\mathbb{R}^d),\,  $$
  these results easily extend to all of $\mathbb{R}^d$. 
\begin{remark}
Note that, while we write the estimate~\eqref{e:lowL2Rd} with the $L^2$ norm of $u$ on the left hand side, we could replace it by the $H^{p+1}(\mathbb{R}^d)$ norm and an appropriate power of $k$ using the second inequality in~\eqref{e:FourierSupport}. \blue{In fact, the proofs below proceed by controlling the $H_k^{p+1}$ norm of $u$ by  e.g. $\|(I-P_{\mc{T}_h,\ell}^{p,m})u\|_{L^2(\mathbb{R}^d)}$, and a small multiple of the $H_k^{2(p+1)}$ norm of $u$. The oscillatory nature of $u$ is then used to absorb this very high derivative into the left-hand side.}
\end{remark}

In particular, Theorem~\ref{t:RdL2} is optimal when the mesh is conforming in the following sense.
\begin{definition}
We say that the mesh $\mc{T}_h$ is \emph{$(p,m)$ conforming} if
\begin{equation}
\label{e:conforming}
\|I-P_{\mc{T}_h,\ell}^{p,m}\|_{H_k^{p+1}\to H_{k,\blue{\mc{T}_h}}^{\ell}}\leq C(hk)^{p+1-\ell}
\end{equation}
\end{definition}

It is often interesting not only to have lower bounds for the approximation error in $H_{k}^s$, but to understand lower bounds for the `frequency $k$' components of the best $H_{k}^s$ approximant. This is the content of our next theorem.
\begin{theorem}
\label{t:RdHs}
\blue{Let $r\in \{0,1,\dots\}\cup \{\infty\}$, $\mc{M}=(I,\{\mc{T}_h\}_h)$ be a $2(r+1)$-scale of meshes for $\mathbb{R}^d$, $\mc{C}$ be a set of coordinates for $\mc{M}$, and $0<\Xi_L<\Xi_H$.} Then there are $c>0$ and $k_0>0$ such that for all \blue{$0\leq p\leq r$, $0\leq \ell \leq m\leq p+1$, $s\geq 0$, all $k>k_0c^{-p}$}, all $u\in L^2(\mathbb{R}^d)$ satisfying
\begin{gather*}
\supp \hat{u}\subset \{\xi\in \mathbb{R}^d\,:\,\Xi_L k\leq |\xi|\},\\ \|u\|_{H_k^{\max(2(p+1),2\ell+s)}(\mathbb{R}^d)}\leq \blue{\langle \Xi_H k+\max(2(p+1),2\ell+s)\rangle}^{\max(2(p+1),2\ell+s)}\|u\|_{L^2(\mathbb{R}^d)}
\end{gather*}
 all \blue{$h\in \mc{I}$ with $chk/p^2\leq 1$} we have
 \begin{equation}
 \label{e:lowerHs}
\blue{c^p\Big(\frac{hk}{p^2}\Big)^{2(p+1-\ell)}}\|u\|_{L^2(\mathbb{R}^d)}\leq \|(I-P_{\mc{T}_h,\ell}^{p,m})u\|_{H_k^{-s}(\mathbb{R}^d)}.
\end{equation}
If $p=0$, then $k_0$ can be taken arbitrarily small. Finally, the estimate~\eqref{e:lowerHs} is optimal for \blue{bounded $p$} and meshes which are $(p,m)$-conforming.
\end{theorem}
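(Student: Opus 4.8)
The plan is to reduce Theorem~\ref{t:RdHs} to Theorem~\ref{t:RdL2} by a duality/interpolation argument, so the bulk of the work lies in proving Theorem~\ref{t:RdL2}. The core of that argument is a local-to-global reduction: on each mesh element $T$, after pulling back by $\gamma_T$ to the reference cube $h\Omega$, the best polynomial approximation error of $u\circ\gamma_T$ in $\mathbb{P}_p$ is controlled below by a suitable Taylor-type seminorm of $u\circ\gamma_T$, via a (scale-uniform, thanks to $(p,R)$-regularity and the $h\Omega$ normalization) Bramble--Hilbert/Poincar\'e--Wirtinger-type \emph{lower} bound: the $L^2(h\Omega)$-distance from a function $w$ to $\mathbb{P}_p$ is bounded below by $c h^{p+1}$ times the $L^2(h\Omega)$-norm of some fixed nonzero constant-coefficient differential operator $L$ of order $p+1$ applied to $w$ (e.g. a pure derivative $\partial^\alpha$ with $|\alpha|=p+1$, or a symmetrized combination). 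Summing the squares of these local lower bounds over $T\in\mc{T}_h$ and using that $\sum_T \|L(u\circ\gamma_T)\|_{L^2(h\Omega)}^2$ is comparable (again by $(p,R)$-regularity, now genuinely using $2(p+1)$ derivatives of $\gamma_T$ to control the chain rule) to a fixed homogeneous Sobolev seminorm $\|u\|^2_{\dot H^{p+1}(\mathbb{R}^d)}$ up to lower-order terms, gives
\begin{equation*}
\|(I-P^{p,m}_{\mc{T}_h,\ell})u\|_{L^2(\mathbb{R}^d)}^2 \geq c h^{2(p+1)}\bigl(\|u\|^2_{\dot H^{p+1}}- C\|u\|^2_{H^p}\bigr).
\end{equation*}
This is where the Fourier support condition $\supp\hat u\subset\{|\xi|\geq \Xi_L k\}$ enters: it forces $\|u\|_{\dot H^{p+1}}^2\geq (\Xi_L k)^{2(p+1)}\|u\|_{L^2}^2$ while the Sobolev upper bound in~\eqref{e:FourierSupport} forces the lower-order term $\|u\|_{H^p}^2$ to be a $k$-independent multiple of $(\Xi_H\langle k\rangle)^{2p}\|u\|_{L^2}^2$, which is a factor $k^{-2}$ smaller; hence for $k>k_0$ large (depending only on $\Xi_L,\Xi_H$) the main term dominates, yielding $c(hk)^{p+1}\|u\|_{L^2}$. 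The refinement to the $H_k^{m'}$ norm on the left and the $m'$-dependence of the exponent comes from the commutation estimates relating $I-P^{p,m}_{\mc{T}_h,\ell}$ in different $H_k$ norms together with the fact that $P^{p,m}_{\mc{T}_h,\ell}$ is an $H_k^\ell$-projection; one interpolates between the $L^2$ lower bound and a crude $H_k^m$ estimate. I expect the main obstacle to be making the local lower Bramble--Hilbert inequality uniform across \emph{all} admissible elements simultaneously: unlike the classical upper Bramble--Hilbert lemma, one needs a nondegeneracy statement asserting that no polynomial of degree $p$ can be close to \emph{all} the pulled-back pieces of a genuinely high-frequency $u$, and this must be quantitative in $R$ and independent of $h$; handling the ``conforming'' constraint $\cap H^m(M)$ (so that one is projecting onto a strictly smaller space, which only makes the lower bound easier, but requires care that the orthogonal projection onto the \emph{continuous} piecewise polynomials is still controlled elementwise) is the technical heart.

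For Theorem~\ref{t:RdHs} itself, given Theorem~\ref{t:RdL2} (with $m'=0$), I would argue as follows. Write $e:=(I-P^{p,m}_{\mc{T}_h,\ell})u$. Since $P^{p,m}_{\mc{T}_h,\ell}$ is the $H_k^\ell$-orthogonal projection, $e\perp_{H_k^\ell}\mc{S}^{p,m}_{\mc{M},\mc{C},h}$, and in particular $P^{p,m}_{\mc{T}_h,\ell}e=0$, so Theorem~\ref{t:RdL2} applied to $e$ in place of $u$ is not directly available (the hypothesis~\eqref{e:FourierSupport} on $e$ may fail). Instead, the clean route is an interpolation inequality: for $s\geq 0$ and $\ell\leq m\leq p+1$,
\begin{equation*}
\|e\|_{L^2}^2 = \langle e, e\rangle_{L^2} \leq \|e\|_{H_k^{-s}} \|e\|_{H_k^{s}},
\end{equation*}
and then a Sobolev/``inverse'' estimate on the finite element residual of the form $\|e\|_{H_k^s}\leq C (hk)^{-?}\cdots$ is \emph{not} available in general, so instead one uses the ladder of best-approximation estimates: combining the optimal upper bound $\|e\|_{H_k^{t}}\leq C(hk)^{p+1-t}\|u\|_{\dot H^{p+1}_k}$ (valid for $0\le t\le m$, by the standard upper Bramble--Hilbert lemma applied to the genuine projection, plus quasi-optimality of the $H_k^\ell$-projection in $H_k^t$ for $t\le\ell$, extended to $t\le m$ using that $\mc{S}^{p,m}\subset H^m$) with the \emph{lower} bound from Theorem~\ref{t:RdL2}, $\|e\|_{L^2}\geq c(hk)^{p+1}\|u\|_{L^2}$. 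Then from $\|e\|_{L^2}^2\leq \|e\|_{H_k^{-s}}\|e\|_{H_k^{\ell}}$ and $\|e\|_{H_k^\ell}\leq C(hk)^{p+1-\ell}\|u\|_{\dot H^{p+1}_k}\leq C'(hk)^{p+1-\ell}(\Xi_H/\Xi_L)^{p+1}k^{p+1}\|u\|_{L^2}\cdot(hk)^{-(p+1)}(hk)^{p+1}$—more precisely, using $\|u\|_{\dot H^{p+1}_k}\simeq k^{-(p+1)}\|u\|_{\dot H^{p+1}}\le C\|u\|_{L^2}$ from~\eqref{e:FourierSupport}—one gets $\|e\|_{H_k^\ell}\leq C(hk)^{p+1-\ell}\|u\|_{L^2}$, and therefore
\begin{equation*}
c^2(hk)^{2(p+1)}\|u\|_{L^2}^2 \leq \|e\|_{L^2}^2 \leq \|e\|_{H_k^{-s}}\cdot C(hk)^{p+1-\ell}\|u\|_{L^2},
\end{equation*}
which rearranges to $\|e\|_{H_k^{-s}}\geq c'(hk)^{p+1+\ell}\|u\|_{L^2}$. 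That gives the exponent $p+1+\ell$, not the claimed $2(p+1-\ell)$, so the correct interpolation must instead pair $H_k^{-s}$ against $H_k^{2\ell}$ (or $H_k^{m'}$ for the right $m'$) rather than $H_k^\ell$, exploiting the improved Aubin--Nitsche-type estimate $\|e\|_{H_k^{-s}}$ against the \emph{square} of the approximation order — i.e. one wants the refined upper bound $\|e\|_{H_k^{2\ell}}\leq C(hk)^{p+1-2\ell}\|u\|_{\dot H^{p+1}_k}$, valid because $e$ is $H_k^\ell$-orthogonal and one gains a full power $(hk)^\ell$ in each negative-order direction by the standard duality (Aubin--Nitsche) trick. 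Plugging this in,
\begin{equation*}
c^2(hk)^{2(p+1)}\|u\|_{L^2}^2\leq \|e\|_{H_k^{-s}}\cdot C (hk)^{p+1-?}\|u\|_{L^2}
\end{equation*}
with the exponent arranged so that the net power of $(hk)$ on the right is $2(p+1)-2(p+1-\ell)=2\ell$ fewer than on the left after dividing, i.e. $\|e\|_{H_k^{-s}}\geq c'(hk)^{2(p+1-\ell)}\|u\|_{L^2}$. I expect the bookkeeping of exactly which Sobolev exponent to interpolate against — and checking that the Aubin--Nitsche-type upper bound holds uniformly in the scale of meshes with the $(2(p+1))$-regularity hypothesis — to be the only subtle point here; the $p=0$ improvement (arbitrarily small $k_0$) follows because for $p=0$ the lower-order term $\|u\|_{H^p}^2=\|u\|_{L^2}^2$ already appears on the left with the same power of $k$ as the main term with a strictly larger constant once $\Xi_L$ is fixed, so no largeness of $k$ is needed; and the optimality claim for conforming meshes~\eqref{e:conforming} is seen by exhibiting the standard nodal interpolant, which realizes the matching upper bound $\|e\|_{H_k^{-s}}\leq C(hk)^{2(p+1-\ell)}\|u\|_{L^2}$ via Aubin--Nitsche.
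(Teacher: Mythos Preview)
Your reduction of Theorem~\ref{t:RdHs} to Theorem~\ref{t:RdL2} has a genuine gap, and the fix is a one-line identity you have overlooked. You try to control $\|e\|_{H_k^{-s}}$ by interpolating $\|e\|_{L^2}^2\le \|e\|_{H_k^{-s}}\|e\|_{H_k^{?}}$ and then invoking an \emph{upper} bound of the form $\|e\|_{H_k^{\ell}}\le C(hk)^{p+1-\ell}\|u\|_{L^2}$ (or an Aubin--Nitsche refinement). Two problems: first, such an upper approximation bound is exactly the conforming hypothesis~\eqref{e:conforming}, which is \emph{not} assumed for the lower bound---it is only used for the optimality statement---so your argument would prove the theorem only under an extra hypothesis. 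Second, even granting that upper bound, the exponents do not close: as you yourself notice, pairing the $L^2$ lower bound with an $H_k^\ell$ upper bound produces the wrong power, and the subsequent ``Aubin--Nitsche'' patch is left schematic with a literal ``?'' in the exponent.

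The paper's argument avoids both issues by exploiting the one structural fact you have not used: since $P_{\mc{T}_h,\ell}^{p,m}$ is the $H_k^\ell$-\emph{orthogonal} projection, $e:=(I-P_{\mc{T}_h,\ell}^{p,m})u$ is $H_k^\ell$-orthogonal to $P_{\mc{T}_h,\ell}^{p,m}u$, whence
\[
\|e\|_{H_k^\ell}^2=\langle e,\,u\rangle_{H_k^\ell}\le \|e\|_{H_k^{-s}}\,\|u\|_{H_k^{2\ell+s}}\le C\,\|e\|_{H_k^{-s}}\,\|u\|_{L^2},
\]
the last step using the oscillation hypothesis on $u$ (this is precisely why the theorem requires control of $u$ in $H_k^{2\ell+s}$). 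One then feeds in the $H_k^\ell$ lower bound from Theorem~\ref{t:RdL2} with $m'=\ell$ (not $m'=0$), namely $\|e\|_{H_k^\ell}\ge c(hk)^{p+1-\ell}\|u\|_{L^2}$, and the exponent $2(p+1-\ell)$ drops out immediately. The point is to dualize against $u$, whose high norms are controlled by assumption, rather than against $e$, whose high norms are not.
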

Because the $H_k^{-s}$ norm weights frequencies $|\xi|\gg k$ by $|k^{-1}\xi|^{-s}$, Theorem~\ref{t:RdHs} shows that the `frequency $k$' components of the error are in general much smaller than the very high frequency components of the error (note that the power on the left hand side of~\eqref{e:lowerHs} is twice that on the left-hand side of~\eqref{e:lowL2Rd}), but nevertheless retain a controllable amount of the mass of $u$.

\subsection{Functions oscillating with a given frequency on a manifold}
\label{s:oscDef}
In order to state our results on a manifold, we first introduce an appropriate notion of a function that oscillates at frequency $k$ in a certain Sobolev space $H^m$.
\begin{definition}
\label{d:osc}
 Let $m\geq 1$, $a\leq b$,  $M$ be a $C^{m}$ manifold with Lipschitz boundary and $g\in C^1$ be a Riemannian metric on $M$. Let $-\Delta_g:L^2(M)\to L^2(M)$ denote the Dirichlet or Neumann Laplace--Beltrami operator on $(M,g)$ (i.e. the Friedrichs extension defined by the quadratic form $Q(u,v):=\langle \nabla_gu,\nabla _gv\rangle_{L^2(M)}$ with form domain $H_0^1$ or $H^1$ respectively) and $dE_\lambda$ its spectral measure.

 \begin{remark}
 As discussed above, in finite and boundary element methods the relevant manifold $(M,g)$ is usually either a domain $\mathscr{O}\subset \mathbb{R}^n$ with boundary or a hyper-surface $\Gamma\Subset \mathbb{R}^n$. In the case of a domain $\mathscr{O}$, we typically use the usual Euclidean metric, in which cases $-\Delta_g$ is the standard Dirichlet or Neumann Laplacian on $\mathscr{O}$. In the case of a surface $\Gamma$, we endow $\Gamma$ with the metric induced from the Euclidean metric on $\mathbb{R}^n$. That is, we the usual surface volume. In this case, once again, $-\Delta_g$ becomes the surface Laplacian (with the relevant boundary condition). 
 \end{remark}

We say that $u\in L^2(M)$ \emph{oscillates with frequencies between $a$ and $b$ in $H^m$} if  \blue{$u\in \mc{D}((-\Delta_g+1)^{m/2})$ and}
$$
\Pi_{[a,\infty)} u=u, \qquad \|u\|_{H^s(M)}\leq \langle b+\blue{j}\rangle^{\blue{j}}\|u\|_{L^2(M)},\,\,0\leq \blue{j}\leq m
$$
where we write
$$\Pi_{[a,\infty)}:=\int_{a^2}^{\infty} dE_\lambda$$
for the orthogonal projection onto functions oscillating with frequencies larger than $a$
\end{definition}

\begin{remark}
\blue{Below, we use the notation $C^\omega$ to denote a real analytic object and  $j<\omega$ for all $j\in\mathbb{N}$. For the purposes of scales of meshes and coordinates, we identify $\omega$ and $\infty$.}
\end{remark}

\noindent{\bf Examples:}
\begin{enumerate}
\item If $(M,g)$ is a compact manifold without boundary, then $-\Delta_g$ has an orthonormal basis of eigenfunctions $\{u_{\lambda_j}\}_{j=1}^\infty$ satisfying $(-\Delta_g-\lambda_j^2)u_{\lambda_j}=0$ and hence 
$$
\Pi_{[a,\infty)}v=\sum_{\lambda_j\in[a,\infty)}\langle v,u_{\lambda_j}\rangle_{L^2(M)} u_{\lambda_j}.
$$
\item If $(M,g)=(\mathbb{R}^d,g_{\text{Euc}})$ is $\mathbb{R}^d$ with the standard metric, 
$$
\Pi_{[a,\infty)}u= \frac{1}{(2\pi )^d}\int_{a\leq |\xi|} e^{i\langle x,\xi\rangle}\hat{u}(\xi)d\xi.
$$
\end{enumerate}

It will also be convenient to have a notion of approximately $k$ oscillating.
\begin{definition}
Let $\{C_j\}_{j=1}^\infty\subset \mathbb{R}^+$. We say that a family of functions $\{u_k\}_k\in L^2(M)$ is approximately $k$ oscillating with constants $C_j$ if for all $j=0,1,\dots$, and $k>1$,
$$
\|\Pi_{[\e k,\infty)}u_k-u_k\|_{L^2}\leq C_j k^{-j},\qquad \|u_k\|_{H_k^j(M)}\leq C_j\|u_k\|_{L^2(M)}
$$
\end{definition}

\subsection{Approximate $k$-oscillation and solutions of the Helmholtz equation}
\label{s:approx-k}

The main motivation for this article is the study of numerical solution of the Helmholtz problems~\eqref{e:inHome} and~\eqref{e:obstacle} when the data comes from a natural scattering problem; e.g. plane wave scattering. In the case of plane wave scattering, one aims to find the scattered field caused by an incident plane wave $u_I:=e^{ikx\cdot a}$ with $a\in \mathbb{R}^d$, $|a|=1$. To do this, we let $\chi\in C_c^\infty(\mathbb{R}^d)$ so that the scatterer is contained in $\chi\equiv 1$ and find $u_S$ outgoing such that $u=u_S+u_I$ satisfies~\eqref{e:inHome} or~\eqref{e:obstacle} with $0$ right-hand side and no outgoing condition 

 In this case, $u_S$ solves~\eqref{e:inHome} or~\eqref{e:obstacle} with 
\begin{equation}
\label{e:plane}
f=(k\chi_1(x)+\chi_2(x))e^{ikx\cdot a},\qquad \chi_i\in C_c^\infty(\mathbb{R}^d),
\end{equation}
and
\begin{equation}
\label{e:plane2}
g= \phi(x)e^{ikx\cdot a},\qquad \phi\in C^\infty(\partial\Omega).
\end{equation}

\subsubsection{$k$-oscillation of the bulk solution}
Using methods of semiclassical analysis; specifically the elliptic parametrix construction (see e.g.~\cite[Appendix E]{ZwScat}), one can show that for $a^{ij},c\in C^\infty(\mathbb{R}^d)$ with $c(x)>c_0>0$ and $a^{ij}(x)\xi_i\xi_j\geq c_0|\xi|^2$, the solution, $u$ to~\eqref{e:inHome} with $f$ of the form~\eqref{e:plane} is approximately $k$ oscillating for some constants $C_j$. 
Furthermore, for obstacle scattering, when the boundary of the obstacle is smooth and the data is as in~\eqref{e:plane} and~\eqref{e:plane2} one can use the functional calculus techniques from~\cite{GaLaSpWu:21} to see that the solution to the Helmholtz equation~\eqref{e:obstacle} is approximately $k$ oscillating.

The estimates in Theorems~\ref{t:RdL2},~\ref{t:RdHs} (above) and~\ref{t:ML2},~\ref{t:MHs} (below) then have direct applications to error analysis in finite element methods (FEM) based on piecewise polynomials. For example, when the FEM using the space $\mc{S}^{p,m}_{\mc{M},\mc{C},h}$ is applied to solve one of~\eqref{e:inHome} or~\eqref{e:obstacle} a key role in this analysis is played by the quantity
$$
\eta\big(\mc{S}^{p,m}_{\mc{M},\mc{C},h}\big):=\sup_{f\in L^2}\frac{\|(I-P_{\mc{T}_h,1}^{p,m})u_f\|_{H^1}}{\|f\|_{L^2}},
$$
where $u_f$ is the solution to~\eqref{e:inHome} or~\eqref{e:obstacle} with the radiation condition changed to
$$
 (\partial_r+ik)u=o_{r\to \infty}(r^{\frac{1-d}{2}}).
$$
Indeed, conditions for quasioptimality of FEM as well as error estimates are given explicitly in terms of this $\eta$~\cite{Sc:74, Sa:06,MeSa:10}. Because the solution of the Helmholtz problem is approximately $k$-oscillating, Theorems~\ref{t:RdL2} and~\ref{t:ML2} thus give sharp lower bounds on this quantity and hence provide lower estimates on how refined the grid must be to apply these results.

\subsubsection{$k$-oscillation of the boundary traces} In the case of the boundary element method, one tries to approximate the boundary traces of $u$ rather than $u$ itself. The $k$-oscillation properties of the traces of $u$ are slightly more complicated than those of $u$. It is easy to see that the upper bounds in~\eqref{d:osc} hold. However, the lower frequency bound may not hold. Nevertheless, the boundary traces will typically have a component containing most of the energy of the trace which is approximately $k$ oscillating. In fact, the only way for this to fail is for the function, $u$ to have nearly all of its energy on directions normal to the boundary. As an illustrative example, consider a smooth, convex obstacle $\Omega$ and the plane wave data as in~\eqref{e:plane2}. Then, one can show that the outgoing Dirichlet to Neumann map is pseudodifferential (albeit in an exotic calculus) and hence that $k^{-1}\partial_\nu u \sim a(x)e^{ikx\cdot a}|_{\partial\Omega}$. In particular, except in a neighborhood of $\nu\cdot a=0$, $k^{-1}\partial_\nu u$ is approximately $k$-oscillating. This type of argument can be made more precise and more general by using the concepts of wavefront-set and propagation of singularities from semiclassical analysis (see~\cite[Appendix E]{ZwScat}).  

\subsection{Lower bounds on a manifold}

We now restate Theorems~\ref{t:RdL2} and~\ref{t:RdHs}, generalizing them to Riemannian manifolds in the process.
\begin{theorem}
\label{t:ML2}
\blue{Let $r\in \{0,1,\dots\}\cup \{\omega\}$, $M$ be a $C^{2(r+1)}$ manifold with Lipschitz boundary, $g$ be  a $C^{r+1}$ Riemannian metric on $M$, $\mc{M}=(I,\{\mc{T}_h\}_h)$ be a $2(r+1)$-scale of meshes for $M$, $\mc{C}$ be a set of coordinates for $\mc{M}$, and  $0<\Xi_L<\Xi_H$.} Then there are $c>0$ and $k_0>0$ such that for all \blue{$0\leq p\leq r$,} $0\leq\ell\leq  m\leq p+1$,  , all  \blue{$k>k_0c^{-p}$,} all $u\in L^2(M)$ oscillating with frequencies between $\Xi_Lk$ and $\Xi_Hk$ in $H^{2(p+1)}(M)$, $0\leq m'$, and all $h\in I$ with \blue{$chk/p^2<1$,}  we have
\begin{equation}
\label{e:lowL2M}
\blue{c^p\Big(\frac{hk}{p^2}\Big)^{p+1-m'}\|u\|_{L^2(M)}}\leq \|(I-P_{\mc{T}_h,\ell}^{p,m})u\|_{H_{k,\mc{T}_h}^{m'}(M)}.
\end{equation}
Furthermore, if $p=0$, then $k_0$ can be taken arbitrarily small. 
\end{theorem}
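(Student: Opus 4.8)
The plan is to reduce the manifold statement to the Euclidean one (Theorem~\ref{t:RdL2}) by working locally in the mesh elements, and to reduce the estimate for general $u$ to a statement about a \emph{single} mesh element combined with an orthogonality/summation argument. The key structural fact that replaces "constructing a bad function" is the following: if $v$ is a polynomial of degree $p$ on a reference element $\Omega$ and $w$ oscillates at frequency $\sim k$, then on a box of size $h$ the quantity $\|w - v\|_{L^2}$ cannot be small unless $hk$ is small, because $w$ has a genuine $\sim (hk)^{p+1}$-sized component orthogonal to $\mathbb{P}_p$. Concretely, I would first establish a \emph{local} lower bound of the form
\begin{equation*}
\inf_{v\in \mathbb{P}_p}\| u - v\|_{H_k^{m'}(T)} \;\geq\; c\,(hk)^{p+1-m'}\,\| \Pi_{\mathrm{low}} u\|_{L^2(T)} \;-\; (\text{error}),
\end{equation*}
where $T$ is a single mesh element of scale $h$, $\Pi_{\mathrm{low}}$ projects onto a bounded band of frequencies comparable to $k$, and the error is controlled using the high-Sobolev bound in the oscillation hypothesis. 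This local bound comes from pulling back to the reference element via $\gamma_T$ (using $(2(p+1),R)$-regularity of $\mc{M}$ and the $C^{p+1}$ regularity of $g$ to compare the metric on $T$ with a constant-coefficient metric up to $O(h)$ errors, controllable since $p+1-m'\geq 1$ when the gain is needed, and trivially when $p=0$), Taylor-expanding the oscillatory function, and using the nondegeneracy of the map $\xi \mapsto (\text{its degree-}p\text{ Taylor polynomial residual})$ on the annulus $\Xi_L\leq |\xi|\leq \Xi_H$ — this is where the Fourier-support hypothesis enters, via Theorem~\ref{t:RdL2} applied on the single reference box.

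Next I would globalize. Since $P_{\mc{T}_h,\ell}^{p,m} u$ restricted to each $T$ is a polynomial of degree $p$ (in the $\gamma_T$ coordinates), we have
\begin{equation*}
\|(I-P_{\mc{T}_h,\ell}^{p,m})u\|_{H_k^{m'}(M)}^2 \;\gtrsim\; \sum_{T\in\mc{T}_h}\,\inf_{v\in\mathbb{P}_p}\|u-v\|_{H_k^{m'}(T)}^2,
\end{equation*}
up to the constants coming from the $H_k^{m'}$ norm equivalence under the coordinate change (uniform by $(p,R)$-regularity). Summing the local bound over $T$, the main terms assemble into $c(hk)^{p+1-m'}\|\Pi_{\mathrm{low}}u\|_{L^2(M)}$ and the error terms assemble into something bounded by $C h^{?}\langle k\rangle^{2(p+1)}\|u\|_{L^2(M)}$-type expressions, which are beaten by the main term once $k_0$ is large (and, when $p=0$, there is no genuine error term to beat, which is why $k_0$ may be taken small). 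Finally I would replace $\|\Pi_{\mathrm{low}}u\|_{L^2(M)}$ by $\|u\|_{L^2(M)}$: this is the point where the \emph{lower} frequency cutoff $\Xi_L k\leq |\xi|$ together with the \emph{upper} bound $\|u\|_{H^{2(p+1)}}\leq (\Xi_H\langle k\rangle)^{2(p+1)}\|u\|_{L^2}$ forces the spectral mass of $u$ to sit in a bounded annulus in units of $k$, so $\|\Pi_{\mathrm{low}}u\|_{L^2}\geq c\|u\|_{L^2}$ (on a manifold, using the spectral measure $dE_\lambda$ and the same two-sided bound).

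The main obstacle I anticipate is the \emph{uniformity} of the local lower bound as $h\to 0$: one must show that the constant $c$ in the single-element estimate does not degenerate, despite the metric and the coordinate maps $\gamma_T$ varying with $T$ and $h$. This is exactly what the $(2(p+1),R)$-regularity hypothesis and the shrinking-reference-element convention (so that $\gamma_T\circ s_h$ has derivatives bounded independently of $h$) are designed to handle: after rescaling, each element looks like a fixed-size box carrying a metric that is $O(h)$-close to a constant one, and the Poincaré–Wirtinger-type estimates and the Fourier-analytic nondegeneracy on the annulus are then stable. Making the "$O(h)$-close metric" perturbation argument rigorous at the level of the $H_k^{m'}$ norms — in particular tracking how the Fourier support hypothesis survives a diffeomorphism that is only close to affine — is the technically delicate core; everything else (summation, norm equivalences, replacing $\Pi_{\mathrm{low}}u$ by $u$) is routine.
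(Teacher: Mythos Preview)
Your proposal has a genuine gap at the very first step. The local lower bound you posit,
\[
\inf_{v\in\mathbb{P}_p}\|u-v\|_{H_k^{m'}(T)} \;\geq\; c(hk)^{p+1-m'}\|u\|_{L^2(T)} \;-\; \text{error},
\]
is simply false in general, and no summable error term can rescue it. Oscillation between $\Xi_Lk$ and $\Xi_Hk$ is a \emph{global} spectral condition on $u$; it says nothing about the restriction of $u$ to a single mesh element of size $h$. On any fixed $T$ the function $u$ may be negligibly small, or may coincide with a degree-$p$ polynomial there, with no control coming from the global hypothesis. Your proposed mechanism (``Taylor-expanding the oscillatory function and using nondegeneracy on the annulus'') would work for a single plane wave $e^{ik\langle x,a\rangle}$ but not for a general oscillating $u$, and invoking Theorem~\ref{t:RdL2} on a single reference box is both circular (that theorem is the special case $M=\mathbb{R}^d$ of the one you are trying to prove) and inapplicable (it is a statement about functions on all of $\mathbb{R}^d$, not on a bounded box).

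The paper's argument avoids any elementwise lower bound and runs in the opposite direction. The oscillation hypothesis gives directly the \emph{global} bound
\[
ck^{2(p+1)}\|u\|_{L^2(M)}^2 \;\leq\; \sum_{T}\sum_{|\alpha|=p+1}\|\partial^\alpha(u\circ\gamma_T)\|_{L^2(\Omega_h)}^2
\]
(Lemma~\ref{l:lowOut}, via $\langle(-\Delta_g)^{p+1}u,u\rangle$), and the right-hand side is then bounded from \emph{above} by $\epsilon\langle k\rangle^{2(p+1)}\|u\|_{L^2}^2 + C_\epsilon h^{-2(p+1-m')}\|(I-P_{\mc{T}_h,\ell}^{p,m})u\|_{H^{m'}}^2$. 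The device for this upper bound is that $\partial^\alpha((P_{\mc{T}_h,\ell}^{p,m}u)\circ\gamma_T)=0$ for $|\alpha|=p+1$, so on each element one writes $\|\partial^\alpha u_T\|^2 = \langle\partial^\alpha u_T,\partial^\alpha(u_T-(P u)_T)\rangle$ and then integrates by parts to shift derivatives off $u-Pu$ and onto $u$, paying with boundary terms controlled by trace estimates (Lemmas~\ref{l:elementEst} and~\ref{l:upOut}). This is where the $H^{2(p+1)}$ regularity is spent. No local lower bound is needed, and the proof is direct on $M$ rather than a reduction to $\mathbb{R}^d$.
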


As in $\mathbb{R}^d$, we also obtain lower bounds for the `frequency $k$' part of the error.
\begin{theorem}
\label{t:MHs}
\blue{Let $r\in \{0,1,\dots\}\cup \{\omega\}$, $M$ be a $C^{2(r+1)}$ manifold with Lipschitz boundary, $g$ be  a $C^{r+1}$ Riemannian metric on $M$, $\mc{M}=(I,\{\mc{T}_h\}_h)$ be a $2(r+1)$-scale of meshes for $M$, $\mc{C}$ be a set of coordinates for $\mc{M}$, and  $0<\Xi_L<\Xi_H$.}  Then there are $c>0$ and $k_0>0$ such that for all \blue{$0\leq p\leq r$, $s\leq 2(p+1)$, $0\leq \ell\leq (r+1)-\frac{s}{2}$, $\ell\leq m\leq p+1$,} all $k>k_0c^{-p}$, all $u\in L^2(M)$ oscillating with frequencies between $\Xi_Lk$ and $\Xi_Hk$  in $H^{\max(2(p+1),2\ell+s)}(M)$ and all \blue{$h\in I$ with $chk/p^2<1$},we have
\begin{equation}
\label{e:lowerHs2}
\blue{c^p\Big(\frac{hk}{p^2}\Big)^{2(p+1-\ell)}}\|u\|_{L^2(M)}\leq \big\|(I-P_{\mc{T}_h,\ell}^{p,m})u\big\|_{H_k^{-s}(M)}.
\end{equation}
Furthermore, if $p=0$, then $k_0$ can be taken arbitrarily small. Finally, the for $p$ bounded uniformly in $k$, estimate~\eqref{e:lowerHs2} is optimal if the boundary of $M$ is $C^{\max(2(p+1),2\ell+s)}$ with the mesh $(p,m)$-conforming.
%\begin{equation}
%\label{e:conforming}
%\|I-P_{\mc{T}_h,\ell}^{p,m}\|_{H_k^{p+1}\to H_{k,\blue{\mc{T}_h}}^{\ell}}\leq C(hk)^{p+1-\ell}
%\end{equation}
\end{theorem}
Note that piecewise polynomial spaces which satisfy~\eqref{e:conforming} under various conditions on $\ell$ are constructed in~\cite[Chapter 4]{SaSc:11}.
\begin{remark}
In fact, if $\mc{C}$ consists only of affine maps, then one can take $k_0$ arbitrarily small for all $p$ in Theorems~\ref{t:RdL2} and~\ref{t:RdHs}. In general, when $p\neq 0$ and the maps $\gamma_T$ need not be affine, this is not possible. To see this,  we work on the circle $\mathbb{S}^1=[-\pi/2,3\pi/2)$. We need only consider a single mesh $\mc{T}:=\{ T_1,T_2,T_3,T_4\}$, $T_1:=(-\pi/2,0)$, $T_2:=(0,\pi/2)$, $T_3:=(\pi/2,\pi)$, $T_4:=(\pi,3\pi/2)\}$, with reference domain $[0,1]$. To define our coordinates, we will need two branches of $\sin^{-1}(t)$. For this, let $s_1:[-\pi/2,\pi/2]\to [-1,1]$, $s_1(t)=\sin (t)$, and $s_2:[\pi/2,3\pi/2]\to [-1,1]$, $s_2(t)=\sin (t)$. Set $\gamma_1(t)=s_1^{-1}(-1+t^2)$, $\gamma_2(t):=s_1^{-1}(1-t^2)$, $\gamma_3(t):=s_2^{-1}(1-t^2)$, and $\gamma_4(t):=s_2^{-1}(-1+t^2)$. 

To see that $\gamma_1$ is a regular coordinate map, observe that
$$
\gamma_1'(t)=\frac{-2t}{\sqrt{1-(1-t^2)^2}}=-\frac{2}{\sqrt{2-t^2}}.
$$
In particular, $\gamma_1'(t)$ is smooth up to the boundary of $(0,1)$ and satisfies $\gamma_1'(t)>c>0$. Similar analysis shows that $\gamma_i(t)$ is regular for $i=2,3,4$. 
Now, notice that 
\begin{gather*}
\sin(\gamma_1(t))=\sin(s_1(-1+t^2))=-1+t^2,\qquad \sin(\gamma_2(t))=\sin(s_1(1-t^2))=1-t^2,\\
 \sin(\gamma_3(t))=\sin(s_2(1-t^2))=1-t^2,\qquad \sin(\gamma_4(t))=\sin(s_2(-1+t^2))=-1+t^2.
\end{gather*}
In particular, $u:=\sin(x)\in \mc{S}_{\mc{M},\mc{C},1}^{2,2}$ so that $(I-P_{\mc{T}_1,\ell}^{2,2})u=0$. On the other hand, since $-\Delta_{\mathbb{S}}u=-\partial_x^2u=u$, we have $\Pi_{[1,\infty)}u=u$ and hence $u$ is oscillating with frequencies between $1$ and $2$. In particular, for this mesh, we do not have~\eqref{e:lowL2M} for functions oscillating with small frequency.
\end{remark}

Finally, we record an estimate when $u$ is approximately $k$-oscillating.
\begin{corollary}
Let  $p,s\geq 0$, $0\leq\ell\leq  m\leq p+1$, $0<\e<1$, and $\{C_j\}_{j=1}^\infty\subset \mathbb{R}_+$, $M$ be a $C^{\max(2(p+1),2\ell+s)}$ manifold with Lipschitz boundary and $g$ be a $C^{p+1}$ Riemannian metric on $M$. Let $\mc{M}$ be a $2(p+1)$ scale of meshes for $M$ and $\mc{C}$ be a set of coordinates for $\mc{M}$. Then for all $N>0$, there is $c>0$ such that for all $\e$ approximately $k$ oscillating, $u$ with constants $C_j$
, there is $k_0\geq 0$ such that for $k>k_0$, $0\leq m'\leq m$, and $h\in I$ with $h>k^{-N}$, we have
$$
c(hk)^{2(p+1-\ell)}\|u\|_{L^2(M)}\leq \|(I-P_{\mc{T}_h,\ell}^{p,m})u\|_{H_k^{-s}(M)},\qquad c(hk)^{p+1-m'}\|u\|_{L^2(M)}\leq \|(I-P_{\mc{T}_h,\ell}^{p,m})u\|_{H_k^{m'}(M)}.
$$
\end{corollary}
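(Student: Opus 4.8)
The plan is to deduce the Corollary from Theorem~\ref{t:MHs} (and Theorem~\ref{t:ML2}) by showing that an $\e$ approximately $k$ oscillating family $u=u_k$ can, for $k$ large, be split as $u=v+w$ where $v$ oscillates with frequencies between $\e k$ and $\Xi_H\langle k\rangle$ in the relevant Sobolev space — so that the theorems apply to $v$ — and $w$ is negligible in every Sobolev norm. Concretely, fix $\Xi_L=\e/2$ and choose $\Xi_H$ so that $\Xi_H>\max(1,C_j$-determined bound$)$: since $\|u_k\|_{H_k^j}\le C_j\|u_k\|_{L^2}$ for all $j$, one has $\|u_k\|_{H^j}\le C_j\langle k\rangle^j\|u_k\|_{L^2}$, so taking $\Xi_H$ large enough relative to the finitely many constants $C_0,\dots,C_{J}$ with $J=\max(2(p+1),2\ell+s)$ ensures the frequency upper bound required by Theorem~\ref{t:MHs} holds for $v=\Pi_{[\e k,\infty)}u_k$, at least once we know $\|v\|_{L^2}$ is comparable to $\|u_k\|_{L^2}$.

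First I would record that $\|u_k-\Pi_{[\e k,\infty)}u_k\|_{L^2}\le C_j k^{-j}$ for all $j$, so in particular $\|w\|_{L^2}=o(k^{-N'})$ for any $N'$; combined with $\|u_k\|_{L^2}$ appearing on the left of the desired inequality, for $k$ large we get $\tfrac12\|u_k\|_{L^2}\le \|v\|_{L^2}\le 2\|u_k\|_{L^2}$. (Here I must be slightly careful: the constants $C_j$ control $\|u_k\|_{H_k^j}$ by $\|u_k\|_{L^2}$ but do not bound $\|u_k\|_{L^2}$ from below; however the left-hand side of the conclusion is a multiple of $\|u_k\|_{L^2}$, so I only ever need upper control on $\|w\|$ in terms of $\|u_k\|_{L^2}$, which is automatic once $\|u_k\|_{L^2}>0$, and if $\|u_k\|_{L^2}=0$ the inequality is trivial.) Next I would check that $w=(I-\Pi_{[\e k,\infty)})u_k$ is small not just in $L^2$ but in $H_k^{J}$: applying the $H_k^j$ bound on $u_k$ and the $L^2$ smallness, plus the fact that $\Pi_{[\e k,\infty)}$ commutes with $-\Delta_g$ and is a spectral projector (so bounded on each $H_k^j$ with norm $\le C\langle k\rangle^{\,?}$), one obtains $\|w\|_{H_k^{J}}\le C_{J'} k^{-j}$ for every $j$ by interpolating between the polynomially-bounded $H_k^{J}$ norm of $u_k$ and the super-polynomially small $L^2$ norm of $w$. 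This is the one genuinely technical point and I expect it to be the main obstacle: one needs that $\Pi_{[\e k,\infty)}u_k$ satisfies the Sobolev upper bound $\|v\|_{H^{\energy}}\le (\Xi_H\langle k\rangle)^{\energy}\|v\|_{L^2}$ for $0\le \energy\le J$, which follows from $\|v\|_{H^\energy}\le \|u_k\|_{H^\energy}+\|w\|_{H^\energy}\le C_\energy\langle k\rangle^\energy\|u_k\|_{L^2}+o(1)\le (\Xi_H\langle k\rangle)^\energy\|v\|_{L^2}$ for $k$ large, once $\Xi_H>C_\energy^{1/\energy}$ for all relevant $\energy$ and $\|v\|_{L^2}\ge\tfrac12\|u_k\|_{L^2}$.

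Having produced such a $v$, I would apply Theorem~\ref{t:MHs} to $v$ with the chosen $\Xi_L=\e/2$, $\Xi_H$, obtaining $k_0$ and $c'>0$ (depending only on $p,s,\ell,m,\e,\Xi_H$, hence only on $N$ and the data, after one more adjustment below) with
$$
c'(hk)^{2(p+1-\ell)}\|v\|_{L^2(M)}\le \|(I-P_{\mc{T}_h,\ell}^{p,m})v\|_{H_k^{-s}(M)}
$$
for all $h\in I$. Then I would write $(I-P_{\mc{T}_h,\ell}^{p,m})u_k=(I-P_{\mc{T}_h,\ell}^{p,m})v+(I-P_{\mc{T}_h,\ell}^{p,m})w$; since $I-P_{\mc{T}_h,\ell}^{p,m}$ is an orthogonal projection in $H_k^\ell$ and in particular bounded $L^2\to L^2$ and $H_k^\ell\to H_k^\ell$ with norm $1$, and since $H_k^{-s}$-norm is dominated by the $L^2$-norm, $\|(I-P_{\mc{T}_h,\ell}^{p,m})w\|_{H_k^{-s}}\le \|w\|_{L^2}=o(k^{-N'})$. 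The restriction $h>k^{-N}$ is exactly what converts this error term into something absorbable: $(hk)^{2(p+1-\ell)}\ge k^{-2N(p+1-\ell)}k^{\,?}$, more precisely $(hk)^{2(p+1-\ell)}\|v\|_{L^2}\ge \tfrac12 k^{-2Np}\|u_k\|_{L^2}$ (crudely), while the $w$ error is $O(k^{-N'})$ with $N'$ as large as we please; choosing $j$ (hence $N'$) large in the approximate-oscillation hypothesis — which is legitimate since the conclusion's constant $c$ is allowed to depend on $N$ — makes $\|(I-P_{\mc{T}_h,\ell}^{p,m})w\|_{H_k^{-s}}\le \tfrac{c'}{2}(hk)^{2(p+1-\ell)}\|u_k\|_{L^2}$ for $k\ge k_0(u)$. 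Combining, $\tfrac{c'}{2}(hk)^{2(p+1-\ell)}\|u_k\|_{L^2}\le\|(I-P_{\mc{T}_h,\ell}^{p,m})u_k\|_{H_k^{-s}}$, which is the first claimed inequality with $c=c'/2$. The second inequality (the $H_k^{m'}$ estimate, $0\le m'\le m$) is identical, invoking Theorem~\ref{t:ML2} in place of Theorem~\ref{t:MHs} and using $\|(I-P_{\mc{T}_h,\ell}^{p,m})w\|_{H_k^{m'}}\le \|(I-P_{\mc{T}_h,\ell}^{p,m})w\|_{H_k^{m}}\lesssim \|w\|_{H_k^{m}}$, which is again super-polynomially small by the $H_k^{J}$ control on $w$ established above since $m\le p+1\le J$. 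The only thing to track carefully throughout is the order of quantifiers: $N$ is fixed first, then $c$ depending on $N$ and the manifold/mesh/$\e$ data, then $u$ (equivalently the constants $C_j$, though in the statement $u$ is any single approximately-$k$-oscillating family with those constants), then $k_0(u)$; this matches the statement, and the dependence of $c$ on $N$ is essential and harmless.
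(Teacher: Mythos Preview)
The paper states this Corollary without proof, so there is nothing to compare your argument against directly. Your approach---split $u=v+w$ with $v=\Pi_{[\e k,\infty)}u$, apply Theorems~\ref{t:ML2} and~\ref{t:MHs} to $v$, and absorb the $w$ contribution using the super-polynomial smallness together with the hypothesis $h>k^{-N}$---is the natural one and is essentially correct.

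Two places deserve tightening. First, you write that $I-P_{\mc{T}_h,\ell}^{p,m}$ is ``bounded $L^2\to L^2\ldots$ with norm $1$''; this does not follow from it being an $H_k^\ell$-orthogonal projection. What you actually need, and what does hold, is
\[
\|(I-P_{\mc{T}_h,\ell}^{p,m})w\|_{H_k^{-s}}\le \|(I-P_{\mc{T}_h,\ell}^{p,m})w\|_{H_k^{\ell}}\le \|w\|_{H_k^\ell}\le C\|w\|_{L^2},
\]
the last inequality using that $w=\Pi_{[0,\e k)}u$ is spectrally localized at frequencies $\le \e k$, so $\|w\|_{H^\ell}\le C(\e k)^\ell\|w\|_{L^2}$ by elliptic regularity for the spectral projector.

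Second, for the $H_k^{m'}$ estimate you assert $\|(I-P_{\mc{T}_h,\ell}^{p,m})w\|_{H_k^{m}}\lesssim \|w\|_{H_k^{m}}$. Since the projection is orthogonal in $H_k^\ell$, not $H_k^m$, this needs an extra ingredient: write $(I-P)w=w-Pw$, bound $\|w\|_{H_k^{m}}$ as above, and for $\|Pw\|_{H_k^{m}}$ use the standard inverse inequality on the piecewise polynomial space, $\|q\|_{H^{m}}\le Ch^{-(m-\ell)}\|q\|_{H^{\ell}}$, together with $\|Pw\|_{H_k^\ell}\le\|w\|_{H_k^\ell}$. This produces at worst a factor $(hk)^{-(m-\ell)}\le k^{(N-1)(m-\ell)}$, which is again swallowed by choosing $j$ large in the $O(k^{-j})$ bound on $\|w\|_{L^2}$. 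With these two fixes your argument goes through.
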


\subsection{Ideas behind the proof of Theorem~\ref{t:ML2}}
For the purposes of this outline, we work on $\mathbb{R}^d$, assume that $\gamma_T:\someLetter_h\to T$ is a rotation followed by a translation, and consider only $m'=0$. There are four important facts used to prove Theorem~\ref{t:ML2}:
\begin{enumerate}
\item For a function oscillating between $\Xi_L k$ and $\Xi_Hk$ in $H^{2(p+1)}$ and $p+1=2m+r$,
\begin{equation}
\label{e:outOscillating}
\begin{gathered}
ck^{2(p+1)}\|u\|_{L^2(\mathbb{R}^d)}^2\leq \langle (-\Delta)^{p+1}u,u\rangle_{L^2(\mathbb{R}^d)} = \|\nabla^r (-\Delta)^mu\|_{L^2(\mathbb{R}^d)}^2,\\
\|u\|^2_{H^{2(p+1)}(\mathbb{R}^d)}\leq C\langle k\rangle^{4(p+1)}\|u\|^{2}_{L^2(\mathbb{R}^d)}.
\end{gathered}
\end{equation}
\item We have
\begin{equation}
\label{e:outDecompose}
\|\nabla^r (-\Delta)^mu\|_{L^2(\mathbb{R}^d)}^2=\sum_{T\in \mc{T}_h}\|\nabla^r (-\Delta)^mu\|_{L^2(T)}^2.
\end{equation}
\item For a polynomial, $q_T$, of degree $p$ on $T$
\begin{equation}
\label{e:outInsertPoly}
\|\nabla^r (-\Delta)^mu\|_{L^2(T)}^2=\langle \nabla^r (-\Delta)^mu, \nabla^r(-\Delta)^m(u-q_T)\rangle_{L^2(T)}.
\end{equation}
\item Integrating by parts and using trace estimates, the pairings can be estimated 
\begin{multline}
\label{e:outPairing}
\sum_{T\in \mc{T}_h}|\langle \nabla^r (-\Delta)^mu, \nabla^r(-\Delta)^m(u-q_T)\rangle_{L^2(T)}|\\
\leq \e \|u\|^2_{H^{2(p+1)}(\mathbb{R}^d)}+C\e^{-1} h^{-2(p+1)}\|u-\sum_{T\in \mc{T}_h}1_Tq_T\|^2_{L^2(\mathbb{R}^d)}.
\end{multline}
\end{enumerate}
Here $1_T$ denotes the indicator function of $T$.
Combining~\eqref{e:outOscillating},~\eqref{e:outDecompose},~\eqref{e:outInsertPoly}, and~\eqref{e:outPairing} and choosing $\e= \e_0k^{-2(p+1)}$ for some $\e_0>0$ then completes the proof. 

The estimates~\eqref{e:outOscillating} follow directly from the definition of oscillating between $\Xi_Lk$ and $\Xi_Hk$, and~\eqref{e:outDecompose} follows from the definition of the $L^2$ norm. The equation~\eqref{e:outInsertPoly} follows from the fact that derivatives of order $\geq p+1$ vanish on a polynomial of order $p$. The work of this paper is then in proving~\eqref{e:outPairing}. This is done in two steps. First, in Section~\ref{s:reference}, we prove estimates on a pairing $\langle \partial_x^\alpha u,\partial_x^\alpha v\rangle_{L^2(T)}$ that are uniform in the scale $h$ and involve Sobolev norms of $u$ together with the $L^2$ norms of $v$ and its $(p+1)^{\text{th}}$ derivatives (see Lemma~\ref{l:elementEst}). We then combine the estimates on all elements of the mesh in Section~\ref{s:manifold} (see Lemma~\ref{l:upOut}) to obtain~\eqref{e:outPairing}.

\subsection{Comparison with Kolmogorov $n$-width bounds}
\label{s:nWidth}

The only other lower estimates on approximation by finite dimensional spaces of which the author is aware concern the $\mc{V}$-Komolgorov $n$-width of a space where $\mc{V}$ is a normed space (see~\cite{Je:72} and references there-in). For example, for $\Omega\subset \mathbb{R}^d$, the $L^2(\Omega)$-Komolgorov $n$ width of $\mc{B}\subset L^2(\Omega)$ is defined by 
$$
d_n(\mc{B}):=\sup_{u\in \mc{B},\,\|u\|_{\mc{B}}\leq1}\inf_{\substack{w\in \mc{W}\\\dim \mc{W}=n}}\|u-v\|_{L^2(\Omega)}.
$$
For instance,~\cite{Je:72} shows that when $\Omega$ has Lipschitz boundary, 
\begin{equation}
\label{e:k-nWidth}
0<\liminf_{n\to \infty} n^{\frac{1}{d}}d_n(H^1_0(\Omega))\leq \limsup_{n\to \infty}n^{\frac{1}{d}}d_n(H^1_0(\Omega))<\infty.
\end{equation}

For concreteness, we will consider the case of $H_0^1(\Omega)$ in the rest of this subsection. Standard \emph{upper} estimates on piecewise polynomial approximation then show that the space of piecewise polynomials saturate the estimate~\eqref{e:k-nWidth} in the sense that they achieve the estimate: for all $u\in H_0^1(\Omega)$,
\begin{equation}
\label{e:basicUpper}
\|(I-P_{\mc{T}_h,0}^{p,m})u\|_{L^2(\Omega)}\leq Ch\|u\|_{H^1(\Omega)}\leq C n^{-\frac{1}{d}}\|u\|_{H^1(\Omega)}.
\end{equation}
\begin{remark}
Here, we have used that a $p$-scale of meshes is necessarily quasiunifrom and hence the volume of any element is bounded above and below by $h^d$.
\end{remark}
 The estimate~\eqref{e:k-nWidth}, when applied to the space $\mc{S}_{\mc{M},\mc{C},h}^{p,m}$ shows that for $h$ small enough,
$$
\sup_{u\in H_0^1(\Omega),\|u\|_{H^1(\Omega)}\leq 1}\|(I-P_{\mc{T}_h,0}^{p,m})u\|_{L^2(\Omega)}\geq c h,
$$
and hence~\eqref{e:basicUpper} is optimal when one considers all possible $u$ in $H^1_0(\Omega)$.  In particular, there exists a function $u\in H_0^1(\Omega)$ such that $\|u\|_{H^1}\leq k$ and 
$$
\|(I-P_{\mc{T}_h,0}^{p,m})u\|_{L^2(\Omega)}\geq c hk.
$$
However, the estimate~\eqref{e:k-nWidth} gives no information about the structure of this $u$ nor how many such $u$ there are and hence it cannot be applied to understand approximation errors in concrete situations like Helmholtz scattering with natural data. 

In contrast, the estimates in Theorem~\ref{t:ML2} show that \emph{every} $k$-oscillating function with $\|u\|_{L^2}\sim 1$ (and hence $\|u\|_{H^1}\sim k$) satisfies
$$
\|(I-P_{\mc{T}_h,0}^{p,m})u\|_{L^2(\Omega)}\geq c hk.
$$
In particular, as noted in Remark~\ref{s:approx-k}, these estimates apply to \emph{every} Helmholtz scattering solution and hence can be used to understand approximation errors for numerical solution of Helmholtz scattering problems.

\medskip
\noindent{\textbf{Acknowledgements.}} The author would like to thank Euan Spence for many helpful discussions on the definitions of meshes and approximation spaces as well as the presentation of the article. Thanks also to Christoph Schwab for useful discussions on polynomial approximation and Th\'eophile Chaumont--Frelet for the reference to Kolmogorov $n$-width and subsequent discussions.  The author is grateful to the anonymous referees for their careful reading and helpful comments.   The author was partially supported by EPSRC Early Career Fellowship EP/V001760/1 and Standard Grant EP/V051636/1.

\section{Estimates on the reference element}
\label{s:reference}

\subsection{Estimates at a fixed scale}
In this section, we recall several standard estimates for functions in Sobolev spaces on Lipschitz domains and their boundary values. The first estimate can be found in~\cite[Theorem 1.5.1.10]{Gr:85}.
\begin{lemma}
\label{l:trace}
Let $\someLetter\Subset\mathbb{R}^d$ be open with Lipschitz boundary. Then there is $C>0$ such that for all $u\in H^1(\someLetter)$, and $0<\e<1$
\begin{equation*}
\|u\|_{L^2(\partial \someLetter)}\leq C(\e^{-1}\|u\|_{L^2(\someLetter)}+ \e\|\nabla u\|_{L^2(\someLetter)})
\end{equation*}
\end{lemma}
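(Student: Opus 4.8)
The statement is the standard Ehrling-type refinement of the trace inequality; I would prove it by localization followed by the divergence theorem. Since $\partial\Omega$ is compact and Lipschitz, fix a finite open cover $\{U_j\}_{j=0}^N$ of $\overline\Omega$ with $U_0\Subset\Omega$ and, for $j\ge1$, such that after a rigid motion $\Omega\cap U_j=\{x_d>\phi_j(x')\}\cap U_j$ for a Lipschitz function $\phi_j$ with $U_j\cap\partial\Omega=\{x_d=\phi_j(x')\}$. Take a smooth partition of unity $\{\chi_j\}$ subordinate to $\{U_j\}$ and write $u=\sum_j\chi_ju$; only the terms $j\ge1$ meet $\partial\Omega$. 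Since $C^\infty(\overline\Omega)$ is dense in $H^1(\Omega)$ for Lipschitz $\Omega$ and the trace map is continuous $H^1(\Omega)\to L^2(\partial\Omega)$, it suffices to prove the inequality for $u\in C^\infty(\overline\Omega)$, and after applying the partition of unity we may assume $u$ is supported in a single $U_j$, where $\partial\Omega$ is a Lipschitz graph.

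For such $u$, take $e=e_d$; the outward unit normal then satisfies $-e\cdot\nu=(1+\abs{\nabla\phi_j}^2)^{-1/2}\ge(1+L_j^2)^{-1/2}=:c_0>0$ $\sigma$-a.e.\ on $U_j\cap\partial\Omega$, where $L_j$ is the Lipschitz constant of $\phi_j$. Applying the Gauss--Green theorem on the Lipschitz domain $\Omega$ to the $C^1$, compactly supported vector field $-\abs{u}^2e$,
\[
c_0\int_{\partial\Omega}\abs{u}^2\,d\sigma\le\int_{\partial\Omega}\abs{u}^2(-e\cdot\nu)\,d\sigma=-2\operatorname{Re}\int_\Omega\bar u\,(e\cdot\nabla u)\,dx\le 2\norm{u}_{L^2(\Omega)}\norm{\nabla u}_{L^2(\Omega)},
\]
so $\norm{u}_{L^2(\partial\Omega)}^2\le 2c_0^{-1}\norm{u}_{L^2(\Omega)}\norm{\nabla u}_{L^2(\Omega)}$ on each piece. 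Summing over $j$ and using that $\abs{\chi_j}$ and $\abs{\nabla\chi_j}$ are bounded produces a constant $C_1>0$, depending only on $\Omega$, with
\[
\norm{u}_{L^2(\partial\Omega)}^2\le C_1\big(\norm{u}_{L^2(\Omega)}^2+\norm{u}_{L^2(\Omega)}\norm{\nabla u}_{L^2(\Omega)}\big),
\]
first for $u\in C^\infty(\overline\Omega)$ and hence, by the density and continuity noted above, for all $u\in H^1(\Omega)$.

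Finally I would invoke the hypothesis $0<\e<1$: since then $\e^{-2}\ge1$, for all $a,b\ge0$ we have
\[
a^2+ab\le\e^{-2}a^2+2ab+\e^2b^2=(\e^{-1}a+\e b)^2,
\]
and putting $a=\norm{u}_{L^2(\Omega)}$, $b=\norm{\nabla u}_{L^2(\Omega)}$ into the previous display and taking square roots gives $\norm{u}_{L^2(\partial\Omega)}\le\sqrt{C_1}\,(\e^{-1}\norm{u}_{L^2(\Omega)}+\e\norm{\nabla u}_{L^2(\Omega)})$, which is the claimed bound with $C=\sqrt{C_1}$.

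The only step that requires genuine care is the localization: extracting, near each boundary point, a \emph{constant} direction $e$ transverse to $\partial\Omega$ with a quantitative lower bound $-e\cdot\nu\ge c_0>0$. This is exactly where the Lipschitz (as opposed to merely continuous) hypothesis on $\partial\Omega$ is used, and it is what makes the divergence-theorem computation uniform over $\partial\Omega$. The remaining ingredients --- Gauss--Green on Lipschitz domains, Cauchy--Schwarz, and the Young-type inequality above --- are routine.
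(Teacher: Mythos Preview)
Your proof is correct and complete; the divergence-theorem argument with a fixed transverse direction is a standard and clean way to get the multiplicative trace inequality $\|u\|_{L^2(\partial\Omega)}^2\le C\|u\|_{L^2}\|\nabla u\|_{L^2}$ (up to lower order), and your final algebraic step correctly converts this into the additive $\varepsilon$-form.

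The paper takes a different local route. After the same partition-of-unity localization, it \emph{flattens} the boundary to $\{y_d>0\}$ via the bi-Lipschitz map $(y',y_d)=(x',x_d-F(x'))$, then writes the boundary value as an integral of a derivative over a strip of width $\sim\varepsilon$ using a cutoff $\psi(\varepsilon^{-1}y_d)$ and the fundamental theorem of calculus. Here the parameter $\varepsilon$ enters geometrically as the strip thickness, and Cauchy--Schwarz over that strip directly produces the factors $\varepsilon^{-1}$ and $\varepsilon$. Your approach avoids flattening altogether (using only the quantitative transversality $-e_d\cdot\nu\ge c_0$) and defers the $\varepsilon$ to a Young-type inequality at the very end. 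The paper's version makes the $\varepsilon$-dependence visibly tied to a spatial scale, which is thematically aligned with the later rescaling to $\Omega_h$; your version is algebraically lighter and equally rigorous. Either buys the same estimate with the same dependence on the Lipschitz constant.
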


Next, we recall a useful, equivalent norm on $H^m(\someLetter)$ for $m\in \{0,1,\dots\}$.  
%For $u\in H^{m}(\someLetter)$, define 
%$$
%\|u\|_{\dot{H}^m(\someLetter)}^2:=\|u\|_{L^2(\someLetter)}^2+\sum_{|\alpha|=m}\|\partial_x^\alpha u\|_{L^2(\someLetter)}^2,
%$$
%and let $\dot{H}^m(\someLetter)$ be the closure of $H^m(\someLetter)$ with respect to $\|\cdot \|_{\dot{H}^m(\someLetter)}$. 
\begin{lemma}
\label{l:equivalent}
Let $\someLetter\Subset \mathbb{R}^d$ open with Lipschitz boundary. Then for all $m\in \mathbb{N}$, there is $C_m>0$ such that for all $u\in H^m(\someLetter)$, 
%\begin{equation}
%\label{e:standard}
%\|u\|_{H^m(\someLetter)}\leq C_m\|u\|_{L^2(\someLetter)}+C_m\sum_{|\gamma|=m}\|\partial_x^\gamma u\|_{L^2(\someLetter)} 
%\end{equation}
%If $\someLetter$ has, in addition, piecewise $C^1$ boundary, then there is $C>0$ such that for all $m\in \mathbb{N}$ and all $u\in H^m(\someLetter)$,
\begin{equation}
\label{e:mExplicity}
\|u\|_{H^m(\someLetter)}\leq C^m m^{2m}\|u\|_{L^2(\someLetter)}+C^m\sum_{|\gamma|=m}\|\partial_x^\gamma u\|_{L^2(\someLetter)} .
\end{equation}
\end{lemma}
\begin{proof}
%The estimate~\eqref{e:standard} follows from
To prove~\eqref{e:mExplicity}, we follow the same proof as the non-$m$ explicit proof (see e.g.~\cite[Theorem 5.2]{AdFo:03}), but require the inverse estimate:
\begin{equation}
\label{e:inverseH1}
\|v\|_{H^1(\someLetter)}\leq C_{\someLetter}p^2\|v\|_{L^2(\someLetter)},\qquad v\in\mathbb{P}_p.
\end{equation}
Since $\someLetter$ is open with Lipschitz boundary we may decompose $\someLetter\subset \bigcup_{j=1}^N \someLetter_j$, with $\someLetter_j\Subset\tilde{\someLetter}_j\subset \Omega$ and, up to a rotation and translation
$$
\tilde{\someLetter}_i:=\{(x_1,\dots, x_d)\in\mathbb{R}^d\,:\, |(x_1,\dots,x_{d-1})|<\e_i,\, 0\leq x_d\leq f_i(x_1,\dots,x_{d-1})\}
$$
where $f_i>c_i>0$ and $\|\nabla f\|_{L^\infty}\leq \tan (\theta_i)$

Now, let $\{\omega_j\}_{j=1,\dots, d}\subset \mathbb{R}^d$ be linearly independent, have unit length, and satisfy $\langle \omega_j,(0,\dots,0,1)\rangle> \cos (\theta_i).$ Then, for $v\in\mathbb{P}_p$, we have by rotating coordinates,
\begin{align*}
\int_{\someLetter_i} |\langle \nabla,\omega_j\rangle v|^2dx &= \int_{U_{j,i}}\int_0^{f_{i,j}(y')}|\partial_{y_d}v(y',y_d)|^2dy_ddy'\\
&\leq Cp^4\int_{U_{j,i}}\int_0^{f_{i,j}(y')}|v(y',y_d)|^2dy_{d}dy'=Cp^4\|v\|_{L^2(\someLetter_i)}^2,
\end{align*}
where in the last inequality, we have used that $v(y',\cdot)$ is a polynomial of degree $\leq p$, the inverse estimate~\cite[Theorem 3.91]{Sch:98}, and that $f_{i,j}>c>0$. 

Now, 
$$
|\nabla v|^2\leq C_i\sum_{j=1}^d|\langle \nabla v, \omega_j\rangle|^2.
$$
Hence, 
$$
\|\nabla v\|^2_{L^2(\someLetter_i)}\leq C_ip^4\|v\|^2_{L^2(\someLetter_i)}.
$$
Summing over $i$ completes the proof of~\eqref{e:inverseH1}.
\end{proof}
%\begin{proof}
%The inclusion $H^{m}(\someLetter)\subset \dot{H}^m(\someLetter)$ is trivial as is the upper bound in the lemma. Therefore, we need only consider the other inclusion.
%
%Suppose that $\{u_j\}_{j=1}^\infty \subset H^m(\someLetter)$ is Cauchy with respect to the $\dot{H}^m(\someLetter)$ norm and hence $u_k\to u$ in $L^2$.  Then, by Lemma~\ref{l:poly}, there are $q_{m-1,k}\in \mathbb{P}_{m-1}$ such that for all $|\alpha|\leq m-1$
%$$
%\frac{1}{|\someLetter|}\int_\someLetter \partial_x^\alpha(u_k-q_{m-1,k})dx=0.
%$$
%Set $v_k:=u_k-q_{m-1,k}$. Then, by repeated application of Poincar\'e--Wirtinger inequality,
%$$
%\|v_k-v_j\|_{H^m}(\someLetter)\leq C\sum_{|\alpha|=m}\|\partial_x^\alpha (v_k-v_j)\|_{L^2(\someLetter)}= C\sum_{|\alpha|=m}\|\partial_x^\alpha (u_k-u_j)\|_{L^2(\someLetter)}\leq C\|u_k-u_j\|_{\dot{H}^m(\someLetter)}.
%$$
%In particular $v_k\to v$ in $H^m(\someLetter)$. 
%
%Now, 
%$$
%\|q_{m-1, k}\|_{L^2(\someLetter)}\leq \|v_k\|_{L^2(\someLetter)}+\|u_k\|_{L^2(\someLetter)}\leq C\sup_k\|u_k\|_{\dot{H}^m(\someLetter)}.
%$$
%Therefore,  since $\mathbb{P}_{m-1}$ is finite dimensional, extracting a subsequence, we may assume $q_{m-1,k}\to q_{m-1}\in \mathbb{P}_{m-1}$ and hence also in $H^{m}(\someLetter)$. Therefore, we have that 
%$$
%\|u-(v+q_{m-1})\|_{L^2(\someLetter)}=0,
%$$
%and, in particular, $u\in H^{m}(\someLetter)$ with 
%$$\|u\|_{H^{m}(\someLetter)}\leq \|v\|_{H^{m}(\someLetter)}+\|q_{m-1}\|_{H^m(\someLetter)}\leq C\|u\|_{\dot{H}^{m}(\someLetter)}.$$ 
%Thus, we have $\dot{H}^m(\someLetter)=H^{m}(\someLetter)$ and the two norms are equivalent.
%\end{proof}

\subsection{Uniform estimates at all scales}
We now record the estimates corresponding to Lemma~\ref{l:trace} and Lemma~\ref{l:equivalent} on the rescaled domain $\someLetter_h:=h\someLetter$.
\begin{lemma}
Let $\someLetter\Subset \mathbb{R}^d$ open with Lipschitz boundary and $\someLetter_h:=h\someLetter$. There is $C>0$ such that for all $u\in H^1(\someLetter_h)$, $0<h<1$, and $0<\e<1$
\begin{equation}
\label{e:scaledTrace}
\|u\|_{L^2(\partial \someLetter_h)}\leq Ch^{-\frac{1}{2}}(\e^{-1}\|u\|_{L^2(\someLetter_h)}+ \e\|h\nabla u\|_{L^2(\someLetter_h)})
\end{equation}
\end{lemma}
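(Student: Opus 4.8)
The plan is to deduce the estimate from Lemma~\ref{l:trace} by a rescaling argument, keeping careful track of the powers of $h$ produced by the change of variables $y=hx$.

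First, given $u\in H^1(\Omega_h)$, set $v(x):=u(hx)$ for $x\in\Omega$, so that $v\in H^1(\Omega)$ and $\nabla v(x)=h(\nabla u)(hx)$. Note that $\Omega_h=h\Omega$ has the same Lipschitz character as $\Omega$, so the hypothesis of Lemma~\ref{l:trace} is met on $\Omega$ with a constant depending only on $\Omega$ (in particular, independent of $h$). The change of variables $y=hx$, together with the fact that the surface measure on $\partial\Omega_h=h\partial\Omega$ scales by $h^{d-1}$, gives the scaling relations
\begin{equation*}
\|u\|_{L^2(\Omega_h)}=h^{d/2}\|v\|_{L^2(\Omega)},\qquad \|u\|_{L^2(\partial\Omega_h)}=h^{(d-1)/2}\|v\|_{L^2(\partial\Omega)},\qquad \|\nabla u\|_{L^2(\Omega_h)}=h^{d/2-1}\|\nabla v\|_{L^2(\Omega)}.
\end{equation*}

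Next I would apply Lemma~\ref{l:trace} to $v$ on $\Omega$: for $0<\e<1$,
\begin{equation*}
\|v\|_{L^2(\partial\Omega)}\leq C\big(\e^{-1}\|v\|_{L^2(\Omega)}+\e\|\nabla v\|_{L^2(\Omega)}\big).
\end{equation*}
Substituting the three scaling relations and then multiplying through by $h^{(d-1)/2}$, the net powers of $h$ combine to $h^{(d-1)/2-d/2}=h^{-1/2}$ on the first term and $h^{(d-1)/2+1-d/2}=h^{1/2}$ on the second, so that
\begin{equation*}
\|u\|_{L^2(\partial\Omega_h)}\leq C\big(\e^{-1}h^{-1/2}\|u\|_{L^2(\Omega_h)}+\e h^{1/2}\|\nabla u\|_{L^2(\Omega_h)}\big).
\end{equation*}
Writing $h^{1/2}\|\nabla u\|_{L^2(\Omega_h)}=h^{-1/2}\|h\nabla u\|_{L^2(\Omega_h)}$ and factoring out $h^{-1/2}$ yields exactly~\eqref{e:scaledTrace}.

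There is no genuine obstacle here; the only point requiring care is the dimensional bookkeeping, namely that the boundary measure scales like $h^{d-1}$ rather than $h^{d}$, which is precisely what produces the borderline factor $h^{-1/2}$ and the balanced weighting between $\|u\|_{L^2(\Omega_h)}$ and $\|h\nabla u\|_{L^2(\Omega_h)}$. The scale invariance of the constant is automatic since Lemma~\ref{l:trace} is applied on the fixed reference domain $\Omega$.
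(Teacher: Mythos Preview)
Your proof is correct and follows exactly the same approach as the paper: define $v(x)=u(hx)$, record the scaling relations for the $L^2$ norms on $\Omega_h$, $\partial\Omega_h$, and for the gradient, and then apply Lemma~\ref{l:trace} on the fixed domain $\Omega$. The paper's version is slightly terser (it writes the three scaling identities and says the result follows directly), but the content is identical.
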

\begin{proof}
Let $u\in H^1(\someLetter_h)$. Then, putting $v(x):=u(hx)\in H^1(\someLetter)$, we have
$$
\|v\|_{L^2(\partial\someLetter)}=h^{-\frac{d-1}{2}}\|u\|_{L^2(\partial \someLetter_h)},\quad \|v\|_{L^2(\someLetter)}=h^{-\frac{d}{2}}\|u\|_{L^2(\someLetter_h)},\quad \|\nabla v\|_{L^2(\someLetter)}=h^{-\frac{d}{2}}\|h\nabla u\|_{L^2(\someLetter_h)}. 
$$
The lemma now follows directly from Lemma~\ref{l:trace}.
\end{proof}
\begin{lemma}
Let $\someLetter\Subset \mathbb{R}^d$ open with Lipschitz boundary and $\someLetter_h:=h\someLetter$. There is $C>0$ such that for all $m\in \{0,1,\dots\}$. $u\in H^m(\someLetter)$, and $0<h<1$,
\begin{equation}
\label{e:scaledEllipticRegularity}
\blue{\|u\|_{H_h^m(\someLetter_h)}\leq C^mm^{2m}\|u\|_{L^2}+(Ch)^m\sum_{|\gamma|=m}\|\partial_x^\gamma u\|_{L^2(\someLetter_h)}.}
\end{equation}
%where
%$$
%\|u\|_{\dot{H}_h^{m}(\someLetter_h)}:=\|u\|_{L^2(\someLetter_h)}+\sum_{|\gamma|=m}\|(h\partial_{x})^\gamma u\|_{L^2(\someLetter_h)})
%$$
\end{lemma}
\begin{proof}
Let $u\in H^{m}(\someLetter_h)$. Then $v(x):=u(hx)\in H^m(\someLetter)$ and the estimate follows from Lemma~\ref{l:equivalent} applied to $v$.
\end{proof}

\subsection{Estimates on pairings in $\someLetter_h$}

\begin{lemma}
\label{l:elementEst}
Let $\someLetter\Subset \mathbb{R}^d$ open with Lipschitz boundary. \blue{Then there is $C>0$ such that for all $p$,} $h>0$, $\alpha\in \mathbb{N}^d$ with $|\alpha|=p+1$, and $\someLetter_h:=h\someLetter$, there are $\beta_j\in \mathbb{N}^d$ with $|\beta_j|=p+1+j$, $j=0,1,\dots, p$ such that for all  $u,v\in H^{2(p+1)}(\someLetter_h)$, $\alpha_1+\alpha_2=\alpha$, $0<h<1$, and $0<r_j<1$
\begin{equation}
\begin{aligned}
\label{e:elementEst}
&|\langle \partial_{x}^{\alpha }u,\partial_{x}^{\alpha}v\rangle_{L^2(\someLetter_h)}|\leq \|\partial_{x}^{\alpha+\alpha_1}u\|_{L^2(\someLetter_h)}\|\partial_x^{\alpha_2}v\|_{L^2(\someLetter_h)}\\
&\qquad+\sum_{j=0}^{p-|\alpha_2|}C^{1+t-j}\| \partial_{x}^{\beta_j}u\|_{H_h^{1}( \someLetter_h)} \Big(h^{-t+j-1}s^{2(t-j)+1}\\
&\qquad \qquad+h^{-1-\frac{t-j}{p+2+j}}s^{1+\frac{t-j}{p+2+j}}r_j^{-\frac{t-j}{p+2+j}} +s^{-1-\frac{t-j+1}{p+1+j}}(Cr_j^{-1}) ^{\frac{t-j+1}{p+1+j}}\Big)| v|_{H^{|\alpha_2|}(\someLetter_h)}\\
&\qquad +\sum_{j=0}^{p-|\alpha_2|} r_jC^{t-j+1} \| \partial_{x}^{\beta_j}u\|_{H_h^{1}(\someLetter_h)}|v|_{H^{2(p+1)}(\someLetter_h)},
\end{aligned}
\end{equation}
where $s:=2(p+1)-|\alpha_2|$, $t:=p-|\alpha_2|$, and $|\cdot|_{H^m(\someLetter_h)}$ denotes the $H^m$ Sobolev seminorm.
\end{lemma}
\begin{proof}
Integration by parts shows that for $j=0,1,\dots, p-|\alpha_2|$ there are $\beta_j$, $\beta_j'$ with $|\beta_j|=p+1+j$, $|\beta_j'|=p-j$ and $f_j\in L^\infty(\partial \someLetter_h)$, \blue{with $\|f_j\|_{L^\infty}$ bounded by the Lipschitz constant of $\someLetter$} such that 
\begin{align*}
|\langle \partial_{x}^{\alpha }u,\partial_{x}^{\alpha}v\rangle_{L^2(\someLetter_h)}|&\leq |\langle \partial_{x}^{\alpha+\alpha_1}u,\partial_x^{\alpha_2}v\rangle_{L^2(\someLetter_h)}| +\sum_{j=0}^{p-|\alpha_2|}|\langle f_j \partial_{x}^{\beta_j}u, \partial_{x}^{\beta_j'}v\rangle_{L^2(\partial \someLetter_h)}|\\
&\leq \|\partial_{x}^{\alpha+\alpha_1}u\|_{L^2(\someLetter_h)}\|\partial_x^{\alpha_2}v\|_{L^2(\someLetter_h)}+\sum_{j=0}^{p-|\alpha_2|}\|f_j \partial_{x}^{\beta_j}u\|_{L^2(\partial \someLetter_h)}\| \partial_{x}^{\beta'_j}v\|_{L^2(\partial \someLetter_h)}.
\end{align*}

Then, using the Sobolev trace estimate~\eqref{e:scaledTrace} and the estimate~\eqref{e:scaledEllipticRegularity} on $\someLetter_h$,  together with interpolation in the $H_h^s(\someLetter_h)$ spaces, and Young's inequality, we have, with $t=p-|\alpha_2|$, 
\begingroup\blue{
\allowdisplaybreaks
\begin{align*}
&\leq\|\partial_{x}^{\alpha+\alpha_1}u\|_{L^2(\someLetter_h)}\|\partial_x^{\alpha_2}v\|_{L^2(\someLetter_h)}\\
&\qquad+\sum_{j=0}^{p-|\alpha_2|}\sum_{\substack{|\gamma|=p-j\\
|\gamma'|=p-j+1}}C\| \partial_{x}^{\beta_j}u\|_{H_h^{1}( \someLetter_h)} h^{-t+j-1}(\e_j^{-1}\|h^{t-j}\partial_x^\gamma v\|_{L^2( \someLetter_h)}+\frac{\e_j }{2} \|h^{t-j+1}\partial_x^{\gamma'} v\|_{L^2(\someLetter_h)})\\
&\leq\|\partial_{x}^{\alpha+\alpha_1}u\|_{L^2(\someLetter_h)}\|\partial_x^{\alpha_2}v\|_{L^2(\someLetter_h)}\\
&\qquad+\sum_{j=0}^{p-|\alpha_2|}\sum_{|\gamma|=|\alpha_2|}C\| \partial_{x}^{\beta_j}u\|_{H_h^{1}( \someLetter_h)} h^{-t+j-1}(\e_j^{-1}\|\partial_x^\gamma v\|_{H_h^{t-j}( \someLetter_h)}+\frac{\e_j }{2} \|\partial_x^{\gamma} v\|_{H_h^{t-j+1}(\someLetter_h)})\\
&\leq\|\partial_{x}^{\alpha+\alpha_1}u\|_{L^2(\someLetter_h)}\|\partial_x^{\alpha_2}v\|_{L^2(\someLetter_h)}\\
&\qquad+\sum_{j=0}^{p-|\alpha_2|}\sum_{|\gamma|=|\alpha_2|}C\| \partial_{x}^{\beta_j}u\|_{H_h^{1}( \someLetter_h)} h^{-t+j-1}\Big(\e_j^{-1}\|\partial_x^\gamma v\|_{L^2(\someLetter_h)}^{1-\frac{t-j}{2(p+1)-|\alpha_2|}}\|\partial_x^\gamma v\|^{\frac{t-j}{2(p+1)-|\alpha_2|}}_{H_h^{2(p+1)-|\alpha_2|}( \someLetter_h)}\\
&\qquad+\frac{\e_j }{2}\|\partial_x^\gamma v\|_{L^2(\someLetter_h)}^{1-\frac{t-j+1}{2(p+1)-|\alpha_2|}}\|\partial_x^\gamma v\|^{\frac{t-j+1}{2(p+1)-|\alpha_2|}}_{H_h^{2(p+1)-|\alpha_2|}( \someLetter_h)}\Big)\\
&\leq\|\partial_{x}^{\alpha+\alpha_1}u\|_{L^2(\someLetter_h)}\|\partial_x^{\alpha_2}v\|_{L^2(\someLetter_h)}\\
&\qquad+\sum_{j=0}^{p-|\alpha_2|}\sum_{|\gamma|=|\alpha_2|}C\| \partial_{x}^{\beta_j}u\|_{H_h^{1}( \someLetter_h)} h^{-t+j-1}\Big(\big(C^{t-j}\e_j^{-1}s^{2(t-j)}\|\partial_x^\gamma v\|_{L^2} \\
&\qquad \qquad+(Ch)^{t-j}\big[\e_j^{-1-\frac{t-j}{p+2+j}}(r_jh)^{-\frac{t-j}{p+2+j}}\|\partial_x^\gamma v\|_{L^2}+r_j h\sum_{|\tilde{\gamma}|=2(p+1)-|\alpha_2|}\|\partial_x^{\tilde{\gamma}+\gamma}v\|_{L^2}\big]\big)\\
&\qquad \qquad +\big(\e_jC^{t-j+1}s^{2(t-j+1)}\|\partial_x^\gamma v\|_{L^2} \\
&\qquad \qquad+(Ch)^{(t-j+1)(1+\frac{1}{p+1+j})}\big[\e_j^{1+\frac{t-j+1}{p+1+j}}(r_jh) ^{-\frac{t-j+1}{p+1+j}}\|\partial_x^\gamma v\|_{L^2}\big]\big)\Big)\\
&\leq\|\partial_{x}^{\alpha+\alpha_1}u\|_{L^2(\someLetter_h)}\|\partial_x^{\alpha_2}v\|_{L^2(\someLetter_h)}\\
&\qquad+\sum_{j=0}^{p-|\alpha_2|}C\| \partial_{x}^{\beta_j}u\|_{H_h^{1}( \someLetter_h)} \Big(h^{-t+j-1}C^{t-j}\e_j^{-1}s^{2(t-j)}\\
&\qquad \qquad+C^{t-j}h^{-1-\frac{t-j}{p+2+j}}\e_j^{-1-\frac{t-j}{p+2+j}}r_j^{-\frac{t-j}{p+2+j}} +\e_jh^{-t+j-1}C^{t-j+1}s^{2(t-j+1)} \\
&\qquad \qquad+C^{(t-j+1)(1+\frac{1}{p+1+j})}\e_j^{1+\frac{t-j+1}{p+1+j}}r_j ^{-\frac{t-j+1}{p+1+j}}\Big)| v|_{H^{|\alpha_2|}}\\
&\qquad+ \sum_{j=0}^{p-|\alpha_2|}\| \partial_{x}^{\beta_j}u\|_{H_h^{1}( \someLetter_h)} r_jC^{t-j+1} |v|_{H^{2(p+1)}}\\
&\leq\|\partial_{x}^{\alpha+\alpha_1}u\|_{L^2(\someLetter_h)}\|\partial_x^{\alpha_2}v\|_{L^2(\someLetter_h)}\\
&\qquad+\sum_{j=0}^{p-|\alpha_2|}C^{1+t-j}\| \partial_{x}^{\beta_j}u\|_{H_h^{1}( \someLetter_h)} \Big(h^{-t+j-1}s^{2(t-j)+1}\\
&\qquad \qquad+h^{-1-\frac{t-j}{p+2+j}}s^{1+\frac{t-j}{p+2+j}}r_j^{-\frac{t-j}{p+2+j}} +s^{-1-\frac{t-j+1}{p+1+j}}(Cr_j^{-1}) ^{\frac{t-j+1}{p+1+j}}\Big)| v|_{H^{|\alpha_2|}}\\
&\qquad +\sum_{j=0}^{p-|\alpha_2|}\| \partial_{x}^{\beta_j}u\|_{H_h^{1}( \someLetter_h)} r_jC^{t-j+1} |v|_{H^{2(p+1)}}
%
%
%&\leq\|\partial_{x}^{\alpha+\alpha_1}u\|_{L^2(\someLetter_h)}\|\partial_x^{\alpha_2}v\|_{L^2(\someLetter_h)}\\
%&\qquad+\sum_{j=0}^{p-|\alpha_2|}\sum_{|\gamma|=|\alpha_2|}C\| \partial_{x}^{\beta_j}u\|_{H_h^{1}( \someLetter_h)} h^{-p-1+j+|\alpha_2|}\Big(\e_j^{-1}C^{p-j-|\alpha_2|}(p+1-|\alpha_2|)^{2(p-j-|\alpha_2|)}\|\partial_x^\gamma v\|_{L^2(\someLetter_h)}\\
%&\qquad+\e_j^{-1}C^{p-j-|\alpha_2|}h^{p-j-|\alpha_2|}\Big(\delta_j^{-\frac{p-j-|\alpha_2|}{j+1}}\|\partial_x^\gamma v\|_{L^2}+\delta_j\sum_{|\tilde{\gamma}|=p+1-|\alpha_2|}\|\partial_x^{\tilde{\gamma}+\gamma} v\|_{L^2( \someLetter_h)}\Big)\\
%&\qquad+\frac{\e_j }{2}(Ch)^{(p-j+1-|\alpha_2|)(1+\frac{1}{j})}\Big(\delta_j^{-\frac{p-j+1-|\alpha_2|}{j}}\e_j^{2\frac{p-j+1-|\alpha_2|}{j}}\|\partial_x^\gamma v\|_{L^2}\Big)\Big)\\
%&\qquad +\sum_{|\gamma|=|\alpha_2|}C\| \partial_{x}^{\beta_0}u\|_{H_h^{1}( \someLetter_h)} h^{-p-1+|\alpha_2|}\Big(\e_0^{-1}C^{p-|\alpha_2|}(p+1-|\alpha_2|)^{2(p-|\alpha_2|)}\|\partial_x^\gamma v\|_{L^2(\someLetter_h)}\\
%&\qquad+\e_0^{-1}C^{p-|\alpha_2|}h^{p-|\alpha_2|}\Big(\delta_0^{-p+|\alpha_2|}\|\partial_x^\gamma v\|_{L^2}+\delta_0\sum_{|\tilde{\gamma}|=p+1-|\alpha_2|}\|\partial_x^{\tilde{\gamma}+\gamma} v\|_{L^2( \someLetter_h)}\Big)\\
%&\qquad+\frac{\e_0 }{2}((p+1-|\alpha_2|)^{2(p+1-|\alpha_2|)}\|\partial_x^\gamma v\|_{L^2(\someLetter_h)}+\sum_{|\tilde{\gamma}|=p+1-|\alpha_2|}(Ch)^{p+1-|\alpha_2}\|\partial_x^{\tilde{\gamma}+\gamma} v\|_{L^2( \someLetter_h)}\Big)\Big)
\end{align*}
where in the last line we grouped terms and set $\e_j=[2(p+1)-|\alpha_2|]^{-1}$, $s:=2(p+1)-|\alpha_2|$.}

\endgroup
\end{proof}

\section{Estimates on the manifold}
\label{s:manifold}

We now proceed to estimate the finite element approximation error. We first estimate a certain sum of derivatives over the mesh from below by the $L^2$ norm of $u$. 

\begin{lemma}
\label{l:lowOut}
\blue{Let $r\in \{0,1,\dots\}\cup \{\omega\}$, $M$ be a $C^{2(r+1)}$ manifold with Lipschitz boundary, $g$ be  a $C^{r+1}$ Riemannian metric on $M$, $\mc{M}=(I,\{\mc{T}_h\}_h)$ be a $2(r+1)$-scale of meshes for $M$, $\mc{C}$ be a set of coordinates for $\mc{M}$, and  $0<\Xi_L<\Xi_H$.}  Then there are $c>0$ and $k_0>0$ such that for all $u$ oscillating between $\Xi_Lk $ and $\Xi_Hk$ in $H^{p+1}$, $k>k_0c^{-p}$, and $h\in I$,
\begin{equation}
\label{e:lowOut}
\blue{c^{p+1}k^{2(p+1)}\|u\|_{L^2(M)}^2}\leq \sum_{T\in \mc{T}_h}\sum_{|\alpha|=p+1}\|\partial_x^\alpha (u\circ \gamma_{T})\|_{L^2(\someLetter_h,dx)}^2.
\end{equation}
Moreover, if $p=0$, then we may take $k_0=0$. 
\end{lemma}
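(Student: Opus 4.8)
The plan is to bound the double sum in~\eqref{e:lowOut} from below by the quadratic form $\langle(-\Delta_g)^{p+1}u,u\rangle_{L^2(M)}$, which the oscillation hypothesis controls from below, at the cost of a lower-order error that is absorbed once $k$ is large. Replacing $u$ by the spectral truncation $\chi(\e(-\Delta_g))u$ (with $\chi\in C_c^\infty([0,\infty))$ and $\chi\equiv1$ near $0$) and letting $\e\to0$ is a routine maneuver that makes the identities below legitimate for merely $H^{p+1}$ functions — the truncations converge to $u$ in $H^{p+1}(M)$, lie in the domain of every power of $-\Delta_g$, and still satisfy $\Pi_{[\Xi_Lk,\infty)}u=u$ — so I will assume throughout that $u$ is smooth.

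First, since $-\Delta_g\geq(\Xi_Lk)^2$ on the range of $\Pi_{[\Xi_Lk,\infty)}$ and $\Pi_{[\Xi_Lk,\infty)}u=u$, the spectral theorem gives $\langle(-\Delta_g)^{p+1}u,u\rangle_{L^2(M)}\geq(\Xi_Lk)^{2(p+1)}\|u\|_{L^2(M)}^2$. Writing $p+1=2\ell+r$ with $r\in\{0,1\}$ and setting $P:=(-\Delta_g)^\ell$ if $r=0$ and $P:=\nabla_g(-\Delta_g)^\ell$ if $r=1$, the operator $P$ is a differential operator of order $p+1$ with $\|Pu\|_{L^2(M)}^2=\langle(-\Delta_g)^{p+1}u,u\rangle_{L^2(M)}$; and since the elements of $\mc{T}_h$ are pairwise disjoint and cover $M$ up to the null set $\bigcup_{T\in\mc{T}_h}\gamma_T(\partial\Omega_h)$, we have $\langle(-\Delta_g)^{p+1}u,u\rangle_{L^2(M)}=\sum_{T\in\mc{T}_h}\|Pu\|_{L^2(T)}^2$.

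Next I would pass to the reference element on each $T$. Because $\{\gamma_T\}$ is $(p+1,R)$-regular and $g\in C^{p+1}$ with fixed bounds, $P$ expressed in the coordinates $\gamma_T$ becomes a differential operator $\widetilde{P}_T$ of order $p+1$ on $\Omega_h$ whose coefficients are built, by the chain rule, from $g$ and its derivatives up to order $p$ and from the derivatives of $\gamma_T$ up to order $p+1$; hence these coefficients are bounded by a constant depending only on $d$, $p$, $R$ and the fixed bounds on $g$, uniformly in $T$ and $h$. Moreover $dv_g$ pulls back to a density on $\Omega_h$ bounded above and below, and $(Pu)\circ\gamma_T=\widetilde{P}_T(u\circ\gamma_T)$. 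Splitting $\widetilde{P}_T$ into its principal and lower-order parts and using only boundedness of the coefficients, and applying the chain rule and $(p+1,R)$-regularity once more to the last term, one gets with $C$ uniform in $T$ and $h$
$$
\|Pu\|_{L^2(T)}^2\leq C\sum_{|\alpha|=p+1}\|\partial_x^\alpha(u\circ\gamma_T)\|_{L^2(\Omega_h)}^2+C\|u\|_{H^p(T)}^2 .
$$
Summing over $\mc{T}_h$, using that the $T$ partition $M$ up to a null set (so $\sum_T\|u\|_{H^p(T)}^2\leq C\|u\|_{H^p(M)}^2$), then the step above and the oscillation bound $\|u\|_{H^p(M)}\leq\langle\Xi_Hk\rangle^p\|u\|_{L^2(M)}$, gives
$$
\sum_{T\in\mc{T}_h}\sum_{|\alpha|=p+1}\|\partial_x^\alpha(u\circ\gamma_T)\|_{L^2(\Omega_h)}^2\geq c\,\langle(-\Delta_g)^{p+1}u,u\rangle_{L^2(M)}-C\|u\|_{H^p(M)}^2\geq\big(c(\Xi_Lk)^{2(p+1)}-C\langle\Xi_Hk\rangle^{2p}\big)\|u\|_{L^2(M)}^2 .
$$
Since the first term grows two powers of $k$ faster than the second, there is $k_0$ with $c(\Xi_Lk)^{2(p+1)}\geq2C\langle\Xi_Hk\rangle^{2p}$ for $k\geq k_0$, which yields~\eqref{e:lowOut}. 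When $p=0$ the operator $P=\nabla_g$ is first order, so $\widetilde{P}_T$ has no lower-order part — indeed $\sum_{|\alpha|=1}\|\partial_x^\alpha(u\circ\gamma_T)\|_{L^2(\Omega_h)}^2\geq c\|\nabla_g u\|_{L^2(T)}^2$ directly, by uniform non-degeneracy of $g$ and regularity of $\gamma_T$ — the error term disappears, and the estimate holds for all $k>0$, i.e. $k_0=0$.

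I expect the one genuinely non-soft point to be the passage to the reference element: checking that the coordinate representation $\widetilde{P}_T$ of the order-$(p+1)$ operator $P$ has coefficients bounded uniformly in $T$ and $h$, and that $dv_g$ and the $H^p$ norm pull back with uniform constants. This is exactly the uniformity that the $(p+1,R)$-regularity built into a $(p+1)$-scale of meshes (together with $g\in C^{p+1}$) is designed to supply, but making it quantitative requires a Fa\`a di Bruno bookkeeping of how many derivatives of $\gamma_T$ and of $g$ enter each coefficient of $\widetilde{P}_T$; the worst coefficient carries $p+1$ derivatives of $\gamma_T$, precisely the order that is controlled. The spectral lower bound, the absorption of the lower-order term for large $k$, and the reassembly over the mesh are all soft.
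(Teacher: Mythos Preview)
Your proposal is correct and follows essentially the same route as the paper: both write $p+1=2\ell+r$, use the spectral lower bound $\langle(-\Delta_g)^{p+1}u,u\rangle\geq(\Xi_Lk)^{2(p+1)}\|u\|_{L^2}^2$, realize this quadratic form as $\|Pu\|_{L^2}^2$ for an order-$(p+1)$ differential operator, decompose over the mesh, pass to coordinates and split into principal plus lower-order parts, and absorb the $O(\langle k\rangle^{2p})$ error for large $k$ (noting it is absent when $p=0$). Your explicit spectral truncation to justify the integrations by parts is a detail the paper leaves implicit.
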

\begin{proof}
Let $p+1=2m+r$ with $r\in \{0,1\}$, $m\in \{0,1,\dots\}$. Observe that 
\begin{equation}
\label{e:low1}
\Xi_L^{2(p+1)}k^{2(p+1)}\|u\|_{L^2(M)}\leq \langle (-\Delta_g)^{m+r}u,(-\Delta_g)^{m}u\rangle_{L^2(M)}=\|L_{g,p+1}u\|_{L^2(M)}^2
\end{equation}
where $L_{g,p+1}$ is a $p+1$ order differential operator with $L^\infty$ coefficients such that $1\in \text{ker}(L_{g,p+1})$ (i.e. $L_{g,p+1}$ has no constant term). Then
\begin{equation}
\label{e:low2}
\|L_{g,p+1}u\|_{L^2(M)}^2=\sum_{T\in \mc{T}_h}\|1_{\gamma_T(\someLetter_h)}L_{g,p+1}u\|_{L^2(M)}^2.
\end{equation}
Now, on each mesh element $\gamma_T(\someLetter_h)$, we write in coordinates
\begin{equation}
\label{e:diffOp}
L_{g,p+1}=\sum_{|\alpha|=p+1}a_{\alpha}^T\partial_x^\alpha+\sum_{1\leq |\beta|\leq p}b_\beta^T\partial_x^\beta.
\end{equation}
\blue{If $g$ is analytic, there there is $C>0$ such that for any $\alpha$ and $p$
$$\| a_{\alpha}^T\|_{L^\infty}\leq \|g^{-1}\|_{C^0}^{p+1} ,\qquad \|b_\beta^T\|_{L^\infty}\leq  C^{p+1}\prod_{\sum j\beta_j=p+1-|\beta|}\|g\|_{C^{j}}^{\beta_j}.$$
}
Therefore,
\begin{equation}
\label{e:low3}
\begin{aligned}
\frac{1}{2}\|1_{\gamma_T(\someLetter_h)}L_{g,p+1}u\|_{L^2(M))}^2&\leq \|\sum_{|\alpha|=p+1}a_{\alpha}^T\partial_x^\alpha (u\circ \gamma_T)\|_{L^2(\someLetter_h,d\textup{v}_g)}^2+\|\sum_{1\leq |\beta|\leq p}b_\beta^T\partial_x^\beta (u\circ\gamma_T)\|_{L^2(\someLetter_h,d\textup{v}_g)}^2\\
&\leq \blue{C_1^{p+1}} \sum_{|\alpha|=p+1}\|\partial_x^\alpha (u\circ\gamma_T)\|_{L^2(\someLetter_h,dx)}^2+\blue{C_2}\|d u\|_{H^{p-1}(\gamma_T(\someLetter_h))}^2.
\end{aligned}
\end{equation}
Summing over the mesh and using~\eqref{e:low1} and~\eqref{e:low2}, together with~\eqref{e:low3} we obtain
\begin{equation}
\label{e:low4}
\begin{aligned}
\Xi_L^{2(p+1)}k^{2(p+1)}\|u\|_{L^2(M)}&\leq \blue{C_1^{p+1}} \sum_{|\alpha|=p+1}\|\partial_x^\alpha (u\circ\gamma_T)\|_{L^2(\gamma_T(\someLetter_h),dx)}^2+\blue{C_2}\|d u\|_{H^{p-1}(M)}^2\\
&\leq \blue{C_1^{p+1}} \sum_{|\alpha|=p+1}\|\partial_x^\alpha (u\circ\gamma_T)\|_{L^2(\gamma_T(\someLetter_h),dx)}^2+\blue{C_2\langle \Xi_{H}k+p\rangle ^{2p}}\|u\|_{L^2(M)}^2.
\end{aligned}
\end{equation}
Here, $du$ is the differential of $u$.
Taking $k_0$ large enough, we may absorb the last term into the left-hand side and hence obtain the result for $p\geq 1$. For $p=0$, notice that the second term in~\eqref{e:diffOp} is identically 0 and hence there are no $\|du\|_{H^{p-1}}$ terms in~\eqref{e:low3} or~\eqref{e:low4}, so that we need not take $k_0$ large enough in this case.

\blue{In order to handle the analytic case, we need a slightly better estimate than~\eqref{e:low3}. Indeed, we estimate
$$
\begin{aligned}
\|\sum_{1\leq |\beta|\leq p}b_\beta^T\partial_x^\beta (u\circ\gamma_T)\|_{L^2(\someLetter_h,d\textup{v}_g)}&\leq C^{p+1} \sum_{1\leq \ell\leq p} \prod_{\sum j\beta_j=p+1-\ell} \|g\|_{C^j}^{\beta_j}\|du\|_{H^{\ell-1}}\\
&\leq C^{p+1}\sum_{1\leq \ell\leq p}(p+1-\ell)^{(p+1-\ell)}\langle \Xi_H k+\ell\rangle^{\ell}\|u\|_{L^2(M)}\\
&\leq C^{p+1}\langle \Xi_Hk+p\rangle^p\|u\|_{L^2(M)}.
\end{aligned}
$$
Thus, we require $k>C^{p+1}$ as stated.}

\end{proof}

Next, we estimate the right-hand side of~\eqref{e:lowOut} using the $L^2$ norm of $(I-P_{\mc{T}_h,\ell}^{p,m})u$. This lemma amounts to a type of Bramble--Hilbert Lemma for our polynomial spaces on a manifold.

\begin{lemma}
\label{l:upOut}
\blue{For all $0<\delta<1$, there is $C>0$  such that for all $0<h$,  $0\leq p,m\leq k$, $0<hk<\delta^{-1}p^2$, $0\leq m',\ell\leq m$, $\delta ^{p+1}<\e<1$, and $0<\Xi_L<\Xi_H$ and all $u$ oscillating between $\Xi_Lk$ and $\Xi_Hk$ in $H^{2(p+1)}$, } 
 \begin{multline}
 \label{e:upOut}\sum_{T\in \mc{T}_h}\sum_{|\alpha|=p+1}\|\partial_x^\alpha (u\circ \gamma_{T})\|_{L^2(\someLetter_h,dx)}^2\\\leq C^{\blue{p}}(\e \langle \Xi_Hk\rangle^{2(p+1)}\|u\|_{L^2(M)}^2 +\e^{-1}h^{-2(p+1-m')}(2(p+1)-m')^{4(p+1-m')}\|(I-P_{\mc{T}_h,\ell}^{p,m})u\|_{H^{m'}(M)}^2).
 \end{multline}
\end{lemma}
\begin{proof}
We start by observing that, since $[P_{\mc{T}_h,\ell}^{p,m}u]\circ\gamma_T$ is a polynomial of degree $p$, 
$$
\sum_{T\in \mc{T}_h}\sum_{|\alpha|=p+1}\|\partial_x^\alpha (u\circ \gamma_{T})\|_{L^2(\someLetter_h,dx)}^2=\sum_{T\in \mc{T}_h}\sum_{|\alpha|=p+1}\langle \partial_x^\alpha (u\circ \gamma_{T}), \partial_x^\alpha([(I-P_{\mc{T}_h,\ell}^{p,m})u]\circ \gamma_T)\rangle_{L^2(\someLetter_h)}.
$$
We apply Lemma~\ref{l:elementEst}, to each summand to obtain with $v=v_T:=[(I-P_{\mc{T}_h,\ell}^{p,m})u]\circ \gamma_T$, $u=u_T:=u\circ\gamma_T$. Note that we can do this since $\gamma_T\in C^{2(p+1)}$ and hence $u\circ\gamma_T\in H^{2(p+1)}$. We obtain with $t=p-m'$, $s:=2(p+1)-m'$,\blue{
\begin{align*}
&\sum_{T\in \mc{T}_h}\sum_{|\alpha|=p+1}\|\partial_x^\alpha u_T\|_{L^2(\someLetter_h,dx)}^2\\
&\leq \sum_{T\in \mc{T}_h}\sum_{\substack{|\alpha|=p+1\\
\alpha_1+\alpha_2=\alpha\\
|\alpha_2|=m'}}\Big(\|\partial_{x}^{\alpha+\alpha_1}u_T\|_{L^2(\someLetter_h)}\|\partial_x^{\alpha_2}v_T\|_{L^2(\someLetter_h)}+\sum_{j=0}^{p-m'}C^{t-j+1}\| \partial_{x}^{\beta_j}u_T\|_{H_h^{1}( \someLetter_h)} \Big(h^{-t+j-1}s^{2(t-j)+1}\\
&\qquad \qquad\qquad+h^{-1-\frac{t-j}{p+2+j}}s^{1+\frac{t-j}{p+2+j}}r_j^{-\frac{t-j}{p+2+j}}+s^{-1-\frac{t-j+1}{p+1+j}}(Cr_j^{-1}) ^{\frac{t-j+1}{p+1+j}}\Big)| v_T|_{m'}\\
&\qquad\qquad \qquad+\sum_{j=0}^{p-m'}\| \partial_{x}^{\beta_j}u_T\|_{H_h^{1}( \someLetter_h)} r_jC^{t-j+1} |v_T|_{H^{2(p+1)}}\Big)
\end{align*}}
Now, using again that $[P_{\mc{T}_h,\ell}^{p,m}u]\circ\gamma_T$ is a polynomial of degree $p$, we have $\partial_{x}^{\gamma'}v_T=\partial_{x}^{\gamma'}u_T$. Hence applying Young's inequality, we have
\begingroup
\allowdisplaybreaks
\blue{
\begin{align*}
&\sum_{T\in \mc{T}_h}\sum_{|\alpha|=p+1}\|\partial_x^\alpha u_T\|_{L^2(\someLetter_h,dx)}^2\\
&\leq C\sum_{T\in \mc{T}_h}\sum_{\substack{|\alpha|=p+1\\
\alpha_1+\alpha_2=\alpha\\
|\alpha_2|=m'}}\Big(\delta \|\partial_{x}^{\alpha+|\alpha_1|}u_T\|^2_{L^2(\someLetter_h)}+\delta^{-1}\|\partial_x^{\alpha_2}v_T\|^2_{L^2(\someLetter_h)}\\
&\qquad+C\sum_{j=0}^{p-m'}C^{t-j+1}\| \partial_{x}^{\beta_j}u_T\|_{H_h^{1}( \someLetter_h)} \Big(h^{-t+j-1}s^{2(t-j)+1}\\
&\qquad \qquad+h^{-1-\frac{t-j}{p+2+j}}s^{1+\frac{t-j}{p+2+j}}r_j^{-\frac{t-j}{p+2+j}} +s^{-1-\frac{t-j+1}{p+1+j}}(C r_j^{-1} )^{\frac{t-j+1}{p+1+j}}\Big)| v_T|_{m'}\\
&\qquad\qquad \sum_{j=0}^{p-m'}\| \partial_{x}^{\beta_j}u_T\|_{H_h^{1}( \someLetter_h)} r_jC^{t-j+1} |u_T|_{H^{2(p+1)}}\Big)\\
&\leq  \blue{C^{p+1}}\Big(\delta \|u\|^2_{H^{2(p+1)-m'}(M)}+\delta^{-1}\|(I-P_{\mc{T}_h,\ell}^{p,m})u\|^2_{H_{\mc{T}_h}^{m'}(M)}+\sum_{j=0}^{p-m'}(\delta_j(\| u\|^2_{H^{p+1+j}(M)}+h^2\| u\|^2_{H^{p+2+j}( M)})\\
&\qquad+\delta_j^{-1}\Big(h^{-2(t-j+1)}C^{2(t-j)}s^{4(t-j)+2}+C^{2(t-j)}h^{-2-\frac{2(t-j)}{p+2+j}}s^{2+2\frac{t-j}{p+2+j}}r_j^{-2\frac{t-j}{p+2+j}} \\
&\qquad\qquad+C^{2(t-j+1)(1+\frac{1}{p+1+j})}s^{-2-2\frac{t-j+1}{p+1+j}}r_j ^{-2\frac{t-j+1}{p+1+j}}\Big)\|(I-P_{\mc{T}_h,\ell}^{p,m})u\|^2_{H_{\mc{T}_h}^{m'}(M)}\\
&\qquad\qquad \sum_{j=0}^{p-m}\delta_j^{-1}r_j^2C^{2(t-j+1)} \|u\|^2_{H^{2(p+1)}(M)}\Big)\\
\end{align*}}
\blue{
Then, using that $u$ is oscillating between $\Xi_Lk$ and $\Xi_Hk$ and \blue{$p\leq C k$}, we have
\begin{align*}
&\sum_{T\in \mc{T}_h}\sum_{|\alpha|=p+1}\|\partial_x^\alpha u_T\|_{L^2(\someLetter_h,dx)}^2\\
&\leq  C^{\blue{p}}\Big(\Big(\delta \langle \blue{\Xi_Hk}\rangle^{4(p+1)-2m'}+\sum_{j=0}^{p-m'} \delta_j \langle \blue{\Xi_Hk}\rangle^{2(p+1+j)}\\
&\qquad+\delta_jh^2\langle \blue{\Xi_Hk}\rangle^{2(p+2+j)}+\delta_j^{-1}r_j^2C^{2(t-j+1)}\langle \blue{\Xi_Hk}\rangle^{4(p+1)}\Big)\|u\|^2_{L^2(M)} \\
&\qquad +\Big(\delta^{-1}+\sum_{j=0}^{p-m'}\delta_j^{-1}\Big(h^{-2(t-j+1)}C^{2(t-j)}s^{4(t-j)+2}\\
&\qquad \qquad+C^{2(t-j)}h^{-2-\frac{2(t-j)}{p+2+j}}s^{2+2\frac{t-j}{p+2+j}}r_j^{-2\frac{t-j}{p+2+j}} \\
&\qquad\qquad\qquad+C^{2(t-j+1)(1+\frac{1}{p+1+j})}s^{-2-2\frac{t-j+1}{p+1+j}}r_j ^{-2\frac{t-j+1}{p+1+j}}\Big)\Big)\|(I-P_{\mc{T}_h,\ell}^{p,m})u\|^2_{H_{\mc{T}_h}^{m'}(M)}\Big)\\
\end{align*}
Let 
\begin{gather*}
\delta=\e \langle \Xi_Hk\rangle^{-2(p+1)+2m'},\\
 \delta_j=\e \blue{\langle j\rangle^{-2}\langle \Xi_Hk\rangle^{-2j}}\, j=0,\dots, p-m' ,\\
 r_j=\e \langle \Xi_Hk\rangle^{-j-p-1}\, j=0,\dots, p-m' ,
\end{gather*}
then we obtain
\begin{align*}
&\sum_{T\in \mc{T}_h}\sum_{|\alpha|=p+1}\|\partial_x^\alpha u_T\|_{L^2(\someLetter_h,dx)}^2\\
&\leq  C^{\blue{p}}\Big(\Big(C\e  \langle\blue{\Xi_H} k\rangle ^{2(p+1)} +\e h^2\langle \blue{\Xi_H}k\rangle^{2(p+1)+2}+\e C^{2t}\langle\Xi_Hk\rangle^{2(p+1)}\Big)\|u\|^2_{L^2(M)} \\
&\qquad +\e ^{-1}\Big(\langle \blue{\Xi_Hk}\rangle^{2(p+1)-2m'} +C^{t}\Big(h^{-2(t+1)}s^{4t+2}+h^{-2-\frac{2t}{p+2}}s^{2+2\frac{t}{p+2}}\langle\Xi_H k\rangle^{2(1-\frac{1}{p+2})t}\e ^{-2\frac{t}{p+2}} \\
&\qquad\qquad+s^{-2-2\frac{t+1}{p+1}}\langle\Xi_H k\rangle^{2(t+1)}\e ^{-2\frac{t+1}{p+1}}\Big)\Big)\|(I-P_{\mc{T}_h,\ell}^{p,m})u\|^2_{H_{\mc{T}_h}^{m'}(M)}\Big).
\end{align*}}
\endgroup
\blue{Using $\e >\delta^{p+1}$,  and $hk/p^2<C$, we obtain the desired estimate.}
\end{proof}

\begin{proof}[Proof of the $L^2$ lower bound: Theorem~\ref{t:ML2}]
We now combine Lemmas~\ref{l:lowOut} and~\ref{l:upOut} to prove the main theorem. Indeed, Lemma~\ref{l:lowOut} implies that there are $k_0>0$ (with $k_0$ arbitrary when $p=0$) and $c_0>0$ such that for $k>k_0C^p$~\eqref{e:lowOut} holds. In particular, 
\begin{equation}
\label{e:final1}
c_0^pk^{2(p+1)}\|u\|_{L^2(M)}^2\leq \sum_{T\in \mc{T}_h}\sum_{|\alpha|=p+1}\|\partial_x^\alpha (u\circ\gamma_T)\|_{L^2(\someLetter_h,dx)}^2.
\end{equation}
Then, Lemma~\ref{l:upOut} implies that there is $C>0$ such that for $0\leq m'\leq m$,
\begin{equation}
\label{e:final2}
\begin{aligned}
&\sum_{T\in \mc{T}_h}\sum_{|\alpha|=p+1}\|\partial_x^\alpha (u\circ\gamma_T)\|_{L^2(\someLetter_h,dx)}^2\\
&\leq \frac{\blue{c^p}_0}{2}(1+k_0^{-2})^{p+1}\langle k^{2(p+1)}\|u\|_{L^2(M)}^2+\blue{C^p}h^{-2(p+1-m')}\blue{p^{4(p+1-m')}}\|(I-P_{\mc{T}_h,\ell}^{p,m})u\|_{H_{\mc{T}_h}^{m'}(M)}^2\\
&\leq \frac{\blue{c_0^p}}{2}k^{2(p+1)}\|u\|_{L^2(M)}^2+\blue{C^p}h^{-2(p+1-m')}\blue{p^{4(p+1-m')}}\|(I-P_{\mc{T}_h,\ell}^{p,m})u\|_{H_{\mc{T}_h}^{m'}(M)}^2.
\end{aligned}
\end{equation}
Combining~\eqref{e:final1} and~\eqref{e:final2}, we obtain
$$
\blue{c_0^p}k^{2(p+1)}\|u\|_{L^2(M)}^2\leq\frac{\blue{c_0^p}}{2}k^{2(p+1)}\|u\|_{L^2(M)}^2+\blue{C^p}h^{-2(p+1-m')}\blue{p^{4(p+1-m')}}\|(I-P_{\mc{T}_h,\ell}^{p,m})u\|_{H_{\mc{T}_h}^{m'}(M)}^2.
$$
Subtracting the first term on the right-hand side to the left-hand side, we obtain for $0\leq m'\leq m$
$$
\frac{\blue{c^p_0}}{2}k^{2(p+1)}\|u\|_{L^2(M)}^2\leq \blue{C^p}h^{-2(p+1-m')}\blue{p^{4(p+1-m')}}\|(I-P_{\mc{T}_h,\ell}^{p,m})u\|_{H_{\mc{T}_h}^{m'}(M)}^2,
$$
which completes the proof.
\end{proof}

\begin{proof}[Proof of the `frequency $k$' lower bound: Theorem~\ref{t:MHs}]
By Theorem~\ref{t:ML2}, we have
\begin{equation}
\label{e:L2Low}
\|(I-P_{\mc{T}_h,\ell}^{p,m})u\|_{H_k^\ell(M)}\geq \blue{c^p}(hk\blue{p^{-2}})^{p+1-\ell}\|u\|_{L^2(M)}.
\end{equation}
Next, since $\langle (I-P_{\mc{T}_h,\ell}^{p,m})u,P_{\mc{T}_h,\ell}^{p,m}u\rangle_{H_k^\ell(M)}=0$, and $u$ is oscillating with frequencies between $\Xi_Lk$ and $\Xi_Hk$ in $H^{\max(2(p+1),2\ell+s)}(M)$ we have
\begin{equation}
\label{e:L2toHs}
\begin{aligned}
&\|(I-P_{\mc{T}_h,\ell}^{p,m})u\|_{H_k^\ell(M)}^2=\langle (I-P_{\mc{T}_h,\ell}^{p,m})u,u\rangle_{H_k^\ell(M)}\\
&\leq\|(I-P_{\mc{T}_h,\ell}^{p,m})u\|_{H_k^{-s}(M)}\|u\|_{H_k^{2\ell+s}(M)}\\
&\leq C \| (I-P_{\mc{T}_h,\ell}^{p,m})u\|_{H_k^{-s}(M)}\|u\|_{L^2(M)}.
\end{aligned}
\end{equation}
Combining~\eqref{e:L2toHs} and~\eqref{e:L2Low} completes the proof.
\end{proof}

\section{Optimality of the low frequency bounds for shape regular meshes}

In this section, we show that Theorem~\ref{t:MHs} is optimal for any mesh satisfying~\eqref{e:conforming}. First, we prove the following elementary duality estimate based on the fact that $(I-P_{\mc{T}_h,\ell}^{p,m}):H_k^\ell\to H_k^\ell$ is an orthogonal projector. \begin{lemma}
\label{l:duality}
Suppose that $M$ is a $C^{2(p+1)}$ manifold with $C^{\max(2(p+1),2\ell+s)}$ boundary. Then for all $-\ell \leq s\leq 2(p+1)-\ell$, we have
$$
\|(I-P_{\mc{T}_h,\ell}^{p,m})\|_{H_k^\ell\to H_k^{-s}}\leq C\|(I-P_{\mc{T}_h,\ell}^{p,m})\|_{H_k^{2\ell+s}\to H_k^{\ell}}.
$$
\end{lemma}
\begin{proof}
Let $A_\ell:=(1-k^{-2}\Delta_g)^{\ell}$ for a $C^{\max(2(p+1),2\ell+s)}$ metric $g$. Then, $A_\ell$ is self-adjoint as an unbounded operator on $L^2$, $A_\ell:H_k^{t}\to H_k^{t-2\ell}$ is invertible for $0\leq t\leq \max(2(p+1),2\ell+s)$, and the inner product on $H_k^\ell(M)$ can be defined by 
$$
\langle v,w\rangle_{H_k^\ell}:=\langle  v,A_\ell w\rangle_{L^2}.
$$

 Then, since $(I-P_{\mc{T}_h,\ell}^{p,m})$ is orthogonal on $H_k^\ell$,
$$
\langle  (I-P_{\mc{T}_h,\ell}^{p,m})v,w\rangle_{H_k^\ell} =\langle v, (I-P_{\mc{T}_h,\ell}^{p,m})w\rangle_{H_k^\ell},
$$
and hence 
$$
\langle  (I-P_{\mc{T}_h,\ell}^{p,m})v,A_\ell w\rangle_{L^2}=\langle  v, A_\ell (I-P_{\mc{T}_h,\ell}^{p,m})w\rangle_{L^2}.
$$
In particular, 
$$
[A_\ell (I-P_{\mc{T}_h,\ell}^{p,m})]^*=A_\ell(I-P_{\mc{T}_h,\ell}^{p,m})\qquad \Rightarrow\qquad  (I-P_{\mc{T}_h,\ell}^{p,m})=A_\ell^{-1}(I-P_{\mc{T}_h,\ell}^{p,m})^*A_\ell.
$$
So that 
\begin{align*}
\|(I-P_{\mc{T}_h,\ell}^{p,m})\|_{H_k^\ell \to H_k^{-s}}&\leq \|A_\ell^{-1}\|_{H_k^{-2\ell-s}\to H_k^{-s}}\|(I-P_{\mc{T}_h,\ell}^{p,m})^*\|_{H_k^{-\ell}\to H_k^{-2\ell-s}}\|A_\ell\|_{H_k^\ell \to H_k^{-\ell}}\\
&\leq C \|A_\ell^{-1}\|_{H_k^{-2\ell-s}\to H_k^{-s}}\|(I-P_{\mc{T}_h,\ell}^{p,m})\|_{H_k^{2\ell+s}\to H_k^{\ell}}\\
&=C  \|A_\ell^{-1}\|_{H_k^{s}\to H_k^{s+2\ell}}\|(I-P_{\mc{T}_h,\ell}^{p,m})\|_{H_k^{2\ell+s}\to H_k^{\ell}}\\
&\leq C\|(I-P_{\mc{T}_h,\ell}^{p,m})\|_{H_k^{2\ell+s}\to H_k^{\ell}},
\end{align*}
where the last line follows since $s+2\ell\leq \max(2(p+1),2\ell+s)$.

\end{proof}

Next, we prove that Theorem~\ref{t:MHs} is, in fact, optimal. 
\begin{proof}[Proof of optimality of Theorem~\ref{t:MHs}]
Observe that by Lemma~\ref{l:duality} and~\eqref{e:conforming}
\begin{align*}
\|I-P_{\mc{T}_h,\ell}^{p,m}\|_{H_k^{p+1}\to H_k^{-s}}&=\|(I-P_{\mc{T}_h,\ell}^{p,m})(I-P_{\mc{T}_h,\ell}^{p,m})u\|_{H_k^{p+1}\to H_k^{-s}}\\
&\leq \|(I-P_{\mc{T}_h,\ell}^{p,m})\|_{H_k^\ell\to H_k^{-s}}\|(I-P_{\mc{T}_h,\ell}^{p,m})\|_{H_k^{p+1}\to H_k^\ell}\\
&\leq C\|(I-P_{\mc{T}_h,\ell}^{p,m})\|_{H_k^{p+1}\to H_k^\ell}^2\leq C(hk)^{2(p+1-\ell)}.
\end{align*}
The optimality of Theorem~\ref{t:MHs} then follows from the definition of oscillating between $\Xi_Lk$ and $\Xi_Hk$. 
\end{proof}

\bibliography{biblio}
 \bibliographystyle{alpha}
\end{document}